\documentclass[9pt]{article}
\usepackage[a4paper, total={7in, 10in}]{geometry}
\usepackage{amssymb, amsmath, amsthm, amstext, amscd}
\usepackage{bm}
\usepackage{mathrsfs}
\usepackage{latexsym, wasysym}
\usepackage{graphics}
\usepackage{verbatim}
\usepackage{amsgen, amsfonts, euscript, enumerate, enumitem, url, calc}
\usepackage{microtype}
\usepackage{colonequals} 
\usepackage{tikz}
\usetikzlibrary{cd}
\usetikzlibrary{decorations.pathmorphing}
\usepackage{stmaryrd}
\usepackage[all]{xy}
\usepackage{color}
\usepackage[colorlinks, pagebackref]{hyperref}
\hypersetup{
colorlinks=true,
citecolor=blue,
linkcolor=blue,
urlcolor=blue}

\usepackage{url} 
\urlstyle{same}

\makeatletter
\def\Url@twoslashes{\mathchar`\/\@ifnextchar/{\kern-.2em}{}}
\g@addto@macro\UrlSpecials{\do\/{\Url@twoslashes}}
\makeatother

\newtheorem{theorem}{Theorem}[section]
\newtheorem{lemma}[theorem]{Lemma}
\newtheorem{prop}[theorem]{Proposition}
\newtheorem{corollary}[theorem]{Corollary}

\newtheorem{sub-lemma}[theorem]{Sub-lemma}
\newtheorem{theorem-definition}[theorem]{Theorem-Definition}

\theoremstyle{definition}

\newtheorem{remark}[theorem]{Remark}

%theorem environments for the appendices
\theoremstyle{theorem}
\newtheorem{a-theorem}{A-Theorem}[section]
\newtheorem{a-lemma}[a-theorem]{A-Lemma}
\newtheorem{a-prop}[a-theorem]{A-Proposition}
\newtheorem{a-corollary}{A-Corollary}[section]
\theoremstyle{definition}
\newtheorem{a-definition}[a-theorem]{A-Definition}
\newtheorem{a-example}[a-theorem]{A-Example}
\newtheorem{a-examples}[a-theorem]{A-Examples}
\newtheorem{a-remark}[a-theorem]{A-Remark}

%theorem which shouldnot have a number
\theoremstyle{theorem}
\newtheorem*{theorem*}{Theorem}
\newtheorem*{question*}{Question}

\synctex=1 %this is used to create a synctex.gz file so that one can do reverse search

%write new commands here

\newcommand{\suchthat}{\;\ifnum\currentgrouptype=16 \middle\fi|\;}

%Declare math operators here

\DeclareMathOperator*{\Spec}{\mathrm{Spec}}

\DeclareMathOperator*{\rank}{\mathrm{rk}}

\title{Comparison of the two notions of characteristic cycles}
\date{}
\author{Ankit Rai}

\AtEndDocument{\bigskip{\footnotesize%
\textsc{Chennai Mathematical Institute, Chennai, India, 603103}\par
\textit{E-mail address} : \texttt{ankitr@cmi.ac.in}}}

\begin{document}
\maketitle

\begin{abstract}
\noindent
Given a constructible sheaf $F$ on a complex manifold, Kashiwara-Schapira defined the notion of singular support and characteristic cycle of $F$. On the other hand for a Zariski constructible \'{e}tale sheaf $F$ on an algebraic variety $X$, Beilinson defined the notion of singular support of $F$ and Saito defined the notion of characteristic cycle of $F$.
In this article we compare these notions and prove that they agree in a suitable sense. In the appendix we discuss extension of the notions of singular support and characteristic cycles developed by Umezaki-Yang-Zhao and Barrett.
\end{abstract}

\tableofcontents

\section{Introduction} \label{sec:intro}
The comparison theorems between the classical and the modern geometry are of interest for multiple reasons.
On one hand it allows the use of powerful algebro-geometric methods to prove theorems in classical geometry, and on the other hand it provides a much needed testing ground for the intuitions behind developing new notions in modern geometry.\\

\noindent
In the next paragraph we discuss the necessary notation to state the main theorem of this article. Let $X$ be a separated smooth scheme of finite type over the field of complex numbers $\mathbb{C}$ and $X^{an}$ denotes the associated complex analytic space $X(\mathbb{C})$. Let $\Lambda$ be a finite local ring and $\mathrm{D}^b_{ctf}(X^{an}, \Lambda)$ (respectively $\mathrm{D}^b_{ctf}(X_{\text{\'{e}t}}, \Lambda)$)  denote the bounded derived category of tor-finite complexes $F$ of sheaves (respectively the \'{e}tale sheaves) with coefficients in finite $\Lambda$-modules, such that the cohomology sheaves $\mathrm{H}^i(F)$ are Zariski constructible and are of finite tor-dimension.
Kashiwara-Schapira refined\footnote{Let $\pi : T^*X \rightarrow X$ be the cotangent bundle then $\pi(\mathrm{SS^{KS}}(F)) = $ the support of $F \in \mathrm{D}^b_{ctf}(X, \Lambda)$ and this justifies the claim that singular support refines the notion of the support of a sheaf.} the notion of the support of $F \in \mathrm{D}^b_{ctf}(X^{an}, \Lambda)$ to that of the singular support.
The singular support of $F \in \mathrm{D}^b_{ctf}(X^{an}, \Lambda)$ is a closed complex analytic conical subset of $(T^*X)(\mathbb{C})$ which we denote by $\mathrm{SS^{KS}}(F)$.
In \cite{Beilinson-constructible}, Beilinson defines the notion of the singular support of $F \in \mathrm{D}^b_{ctf}(X_{\text{\'{e}t}}, \Lambda)$ as a Zariski closed conical subset of $T^*X$ denoted in this article by $\mathrm{SS^B}(F)$.
The notion of the singular support in the sense of Beilinson has been extended to any $F \in \mathrm{D}^b_{c}(X_{\text{\'{e}t}}, \Lambda)$ for $\Lambda \in \lbrace \mathbb{Z}_{\ell}, \mathbb{Q}_{\ell} \rbrace$ (See \cite[\S 5.15, Thm. 5.17]{Umezaki-Yang-Zhao-20}). We denote this by $\mathrm{SS^{BUYZ}}(F)$. In this article, we prove the following

%theorem
\begin{theorem} \label{thm:intro-SS}
Let $F \in \mathrm{D}^b_c(X_{\textup{\'{e}t}}, \Lambda)$, where $\Lambda \in \lbrace \mathbb{Z}/\ell^n\mathbb{Z}, \mathbb{Z}_{\ell}, \mathbb{Q}_{\ell} \rbrace$ and let $\overline{\mathrm{SS^{KS}}(F)}^{Zar}$ denote the Zariski closure of $\mathrm{SS^{KS}}(F)$ with the reduced induced subscheme structure. Then $\overline{\mathrm{SS^{KS}}(F)}^{Zar} = \mathrm{SS^{BUYZ}}(F)$.
\end{theorem}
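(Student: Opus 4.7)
The plan is to prove the two inclusions separately, after a standard reduction to $\Lambda = \mathbb{Z}/\ell^n\mathbb{Z}$: for $\Lambda = \mathbb{Z}_{\ell}$, both $\mathrm{SS^{KS}}$ and $\mathrm{SS^{BUYZ}}$ are compatible with reduction modulo $\ell^n$, and the $\mathbb{Q}_{\ell}$ case reduces to the $\mathbb{Z}_{\ell}$ one by clearing denominators. Let $F^{an}$ denote the analytification of $F$ via Artin's comparison functor. The key structural point is that each of the two singular supports admits a minimality characterization: it is the smallest closed conical subset (in the analytic topology for $\mathrm{SS^{KS}}$, in the Zariski topology for $\mathrm{SS^{BUYZ}}$) of $T^*X$ with respect to which $F$ is ``micro-supported'' in the appropriate sense (non-characteristic for local cohomology of real test functions, respectively local acyclicity of algebraic test morphisms).

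For the inclusion $\overline{\mathrm{SS^{KS}}(F)}^{Zar} \subseteq \mathrm{SS^{BUYZ}}(F)$, since the right-hand side is already Zariski closed, it suffices to prove $\mathrm{SS^{KS}}(F^{an}) \subseteq \mathrm{SS^{BUYZ}}(F)(\mathbb{C})$ as subsets of $(T^*X)(\mathbb{C})$. Fix $(x, \xi) \notin \mathrm{SS^{BUYZ}}(F)$; by Beilinson-UYZ there exist a Zariski neighborhood $U$ of $x$ and an open conical neighborhood $\Omega$ of $(x, \xi)$ in $T^*U$ such that every algebraic morphism $h : V \to \mathbb{A}^1$ with $V \to U$ \'{e}tale and $dh \in \Omega$ is locally $F|_V$-acyclic at points mapping to $x$. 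Passing to analytifications and comparing \'{e}tale with analytic nearby cycles (Artin) yields vanishing of the relevant analytic nearby cycles. Since the Kashiwara-Schapira non-characteristic criterion at $(x, \xi)$ depends only on the $1$-jets of test functions (and any analytic $1$-jet is realized, in local coordinates, by an algebraic linear form), this gives $(x, \xi) \notin \mathrm{SS^{KS}}(F^{an})$.

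For the reverse inclusion $\mathrm{SS^{BUYZ}}(F) \subseteq \overline{\mathrm{SS^{KS}}(F)}^{Zar}$, set $C := \overline{\mathrm{SS^{KS}}(F^{an})}^{Zar}$, which is Zariski closed and conical (the $\mathbb{G}_m$-action on $T^*X$ being algebraic, the Zariski closure of a conical set is conical). By the minimality characterization of $\mathrm{SS^{BUYZ}}$, it suffices to verify that $F$ is $C$-micro-supported in the sense of Beilinson-UYZ: for every algebraic morphism $h: V \to Y$ with $V$ \'{e}tale over an open of $X$ and $Y$ smooth that is $C$-transversal, $h$ is locally $F|_V$-acyclic. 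For such $h$, the analytification $h^{an}$ is $\mathrm{SS^{KS}}(F^{an})$-transversal (as $\mathrm{SS^{KS}}(F^{an}) \subseteq C(\mathbb{C})$), and by the Kashiwara-Schapira non-characteristic deformation lemma, $h^{an}$ is $F^{an}$-analytically locally acyclic. The comparison theorem between analytic and \'{e}tale local acyclicity for complex algebraic morphisms now transfers this to \'{e}tale local acyclicity of $h$ relative to $F$.

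The principal obstacle is this two-way comparison between analytic and \'{e}tale local acyclicity for general, not necessarily proper, algebraic morphisms between complex varieties. For proper morphisms it is an immediate consequence of Artin's comparison theorem; in the general case one must either compactify carefully (controlling vanishing cycles at the boundary via proper base change) or invoke direct comparisons of nearby cycle functors in the style of Deligne, Fujiwara, and Orgogozo. A secondary subtlety is ensuring that algebraic test data suffice to detect the Kashiwara-Schapira singular support: this reduces to the fact that the micro-local non-characteristic condition is invariant under passing to the $1$-jet of the test function, together with the observation that the Zariski closure of a conical analytic subset of $(T^*X)(\mathbb{C})$ remains conical.
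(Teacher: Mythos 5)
Your proposal takes a genuinely different route from the paper. The paper derives the singular support comparison (Theorem~\ref{thm:intro-SS}) as a corollary of the characteristic cycle comparison (Theorem~\ref{thm:main}): it proves $\mathrm{CC^{KS}}(F) = \mathrm{CC^{SUYZ}}(F)$ by induction on the dimension of the support, using Hironaka resolution, the decomposition theorem for the direct image, and an explicit computation for $j_!L$ when the boundary is a strict normal crossing divisor, and then recovers the singular support equality by taking supports of both cycles (Lemma~\ref{lemma:support-analytic-cycle} and \cite[Prop.\ 5.14(2)]{Saito-proper}). You instead attempt a direct double-inclusion argument at the level of micro-supports. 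Your easy direction, $\mathrm{SS^{BUYZ}}(F)\subseteq\overline{\mathrm{SS^{KS}}(F)}^{Zar}$, essentially reproduces the Remark in \S\ref{subsec:algebraic} of the paper: use minimality of the algebraic singular support plus Deligne's comparison theorem for vanishing cycles. That part is correct.

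The hard direction, $\overline{\mathrm{SS^{KS}}(F)}^{Zar}\subseteq\mathrm{SS^{BUYZ}}(F)$, has a genuine gap. First, the appeal to ``$1$-jet invariance'' of the Kashiwara--Schapira criterion is incorrect: vanishing cycles $\phi^{an}_f(F)$ depend on the full germ of $f$, not only its $1$-jet. What the criterion (Theorem-Definition~\ref{thm:singular-support-KS}) actually gives you is an \emph{existential} freedom: to show $(x,\xi)\notin\mathrm{SS^{KS}}(F^{an})$ you need one holomorphic $f$ with $f(x)=0$, $df(x)$ near $\xi$, and $\phi^{an}_f(F^{an})_x = 0$ --- and you are free to take $f$ algebraic, which is what allows Deligne's Theorem~\ref{thm:comp-vanishing-cycle} to be applied. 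Second, and more seriously, the step ``$(x,\xi)\notin\mathrm{SS^{BUYZ}}(F)$ yields, by Beilinson--UYZ, that every algebraic $h$ with $dh$ in a conical neighbourhood of $(x,\xi)$ is locally $F$-acyclic at points above $x$'' is not a direct consequence of the minimality characterization. Minimality gives local acyclicity only for test pairs $(j,h)$ that are $\mathrm{SS^{BUYZ}}(F)$-transversal, and that requires $dh(y)\notin\mathrm{SS^{BUYZ}}(F)$ for all $y$ in a neighbourhood of $x$, not just at $x$ itself. One must produce such a transversal test pair with $dh(x)$ near $\xi$, which is a genericity/dimension-count argument using that $\mathrm{SS^{BUYZ}}(F)$ is isotropic, conical, of dimension $\dim X$; this is the nontrivial input your sketch leaves out. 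Finally, the ``principal obstacle'' you identify --- comparing analytic and \'etale local acyclicity for non-proper morphisms --- is not the real difficulty here: for morphisms to a curve, local acyclicity is equivalent to vanishing of the vanishing cycles (the Lemma in \S\ref{subsec:vanishing-cycles}), and these are compared by Deligne's classical result cited as Theorem~\ref{thm:comp-vanishing-cycle}; there is no need for Fujiwara--Orgogozo in this setting.

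What the paper's route buys in exchange for its heavier machinery is the stronger cycle-level statement (Theorem~\ref{thm:intro-CC}), with the singular support comparison coming out for free; a direct argument of the kind you sketch, even once the transversality gap is filled, would only give the set-theoretic statement.
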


\noindent
The notion of the singular support can be upgraded to a cycle supported on the singular support. This has been achieved by Kashiwara-Schapira in the analytic setting, and by Saito in the algebraic setting.
The characteristic cycle defined by Kashiwara-Schapira is defined only for $F \in \mathrm{D}^b_c(X^{an}, \Lambda)$, where $\Lambda$ is a field of characteristic 0. We denote this cycle by $\mathrm{CC^{KS}}(F)$.
The characteristic cycle defined by Saito, a priori, only makes sense for $F \in \mathrm{D}^b_{ctf}(X_{\text{\'{e}t}}, \Lambda)$, where $\Lambda$ is a finite local ring. This notion has also been extended to any $F \in \mathrm{D}^b_c(X_{\text{\'{e}t}}, \Lambda)$ for $\Lambda \in \lbrace \mathbb{Z}_{\ell}, \mathbb{Q}_{\ell} \rbrace$ (See \cite[\S 5.15]{Umezaki-Yang-Zhao-20}). We denote it by $\mathrm{CC^{SUYZ}}(F)$.
The two notions are compared in the following

%theorem
\begin{theorem} \label{thm:intro-CC}
For any $F \in \mathrm{D}^b_c(X_{\text{\'{e}t}}, \mathbb{Q}_{\ell})$, suppose $\mathrm{CC^{KS}}(F) = \underset{i}{\sum} m_i [X_i]$ as a Lagrangian cycle (see equation \eqref{eqn:lagrangian-cycle-cohomology-class}), where $X_i$ are irreducible components of $\mathrm{SS^{KS}}(F)$ then $\mathrm{CC^S}(F) = \underset{i}{\sum} m_i [\overline{X_i}^{Zar}]$ as cycles.
\end{theorem}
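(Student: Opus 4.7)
The plan is to reduce to a local calculation via the Milnor index formula, then invoke Deligne's analytic--étale comparison of vanishing cycles.

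\textbf{Step 1 (matching irreducible components).} Theorem \ref{thm:intro-SS} can be upgraded to the equality of analytic sets $\mathrm{SS^{KS}}(F) = \mathrm{SS^{BUYZ}}(F)^{an}$: both are closed conical in $(T^*X)(\mathbb{C})$ of pure dimension $\dim X$ (Kashiwara--Schapira / Gabber), and the left-hand side is Zariski-dense in the right, so a pure-dimensional analytic argument forces equality. Since the smooth locus of an irreducible complex variety is connected, $Y^{an}$ is analytically irreducible for every irreducible algebraic $Y \subset T^*X$; hence the analytic irreducible components of $\mathrm{SS^{KS}}(F)$ biject with the algebraic irreducible components of $\mathrm{SS^{BUYZ}}(F)$ via $X_i = Y_i^{an}$ and $\overline{X_i}^{Zar} = Y_i$. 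The theorem is thus equivalent to showing $\mathrm{mult}_{Y_i}\mathrm{CC^{SUYZ}}(F) = m_i$ for each $i$.

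\textbf{Step 2 (Milnor characterisation).} Both characteristic cycles are pinned down by index formulas: for an étale $h : U \to X$ and an algebraic (resp.\ holomorphic) $f : U \to \mathbb{A}^1$ with an isolated characteristic point at $u \in U$,
\[
(\mathrm{CC^{SUYZ}}(h^*F), [df])_{T^*U,\,u} = -\mathrm{totdim}\,\phi_f(h^*F)_u, \qquad
(\mathrm{CC^{KS}}(F^{an}), [df^{an}])_{u} = -\dim\,\phi_{f^{an}}(F^{an})_u,
\]
and totdim reduces to dim since the base is $\mathbb{C}$ and there is no wild ramification. Fixing $i$, I would choose a closed point $y_0$ in the smooth locus of $Y_i$ avoiding every $Y_j$ with $j \neq i$, and then use generic transversality to produce an algebraic $f$ on an étale neighbourhood $U$ of $\pi(y_0)$ whose differential $[df]$ meets $\mathrm{SS^{BUYZ}}(F)$ transversally at the single point $y_0$. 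Both Milnor formulas then collapse to a single contribution $m \cdot ([Y_i], [df])_{y_0}$, where $m$ is the multiplicity of $Y_i$ (resp.\ $X_i = Y_i^{an}$) in the relevant characteristic cycle; the local intersection number at the smooth point $y_0$ is intrinsic to the local ring, hence the same algebraically and analytically.

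\textbf{Step 3 (vanishing-cycle comparison).} The two right-hand sides agree by Deligne's analytic--étale comparison for vanishing cycles (SGA 7, Exp.\ XIV): for algebraic $f$ and algebraic $F$, $(R\phi_f^{\text{\'{e}t}} F)^{an} \simeq R\phi_{f^{an}}^{an} F^{an}$, and in particular the two dimensions at $y_0$ coincide. Combining with Step 2 yields $m_i = \mathrm{mult}_{Y_i}\mathrm{CC^{SUYZ}}(F)$, proving the theorem.

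The principal obstacle is the passage through $\mathbb{Q}_\ell$-coefficients via \cite{Umezaki-Yang-Zhao-20}: Saito's Milnor formula is originally stated for finite local $\Lambda$, and $\mathrm{CC^{SUYZ}}$ is built from mod-$\ell^n$ cycles by a limit followed by tensoring with $\mathbb{Q}_\ell$, so one must verify that the right-hand side $-\mathrm{totdim}\,\phi_f(-)_u$ is compatible with this procedure and continues to characterise $\mathrm{CC^{SUYZ}}$ on the nose. A secondary technical point is that Beilinson's framework supplies transverse test pairs $(U, f)$ only étale-locally over $X$, but étale base change is harmless for both vanishing cycles and for intersections inside $T^*X$, so this causes no essential difficulty.
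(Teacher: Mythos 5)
Your proposal is mathematically reasonable but takes a genuinely different route from the paper. The paper proves $\mathrm{CC^{SUYZ}}(F) = \mathrm{CC^{KS}}(F)$ by induction on the dimension of the support: it reduces via perverse additivity to simple perverse sheaves, then uses Hironaka's resolution and the decomposition theorem for supports to reduce to $j_{!*}L_U$ with $X\setminus U$ a strict normal crossing divisor, and finally uses the triangle $j_!j^*F \to F \to i_*i^*F$ to reduce to the explicit computation of $\mathrm{CC}(j_!L)$ (Lemma \ref{lemma:char-cycle-shriek} on the analytic side, Proposition \ref{prop:basic-properties-CC-SS-SUYZ}(4) on the algebraic side), together with the projective pushforward compatibility of Lemma \ref{lemma:char-cycle-equal-pushforward}. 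Your approach instead matches multiplicities component by component: identify the analytic components with analytifications of the algebraic ones, then use a generic isolated-characteristic-point test function and equate the two Milnor-index formulas via Deligne's analytic--étale vanishing-cycle comparison (Theorem \ref{thm:comp-vanishing-cycle}). What your approach buys is that it avoids resolution of singularities, the decomposition theorem, and the pushforward compatibility $\mathrm{CC^{KS}}(p_*F) = \mathrm{CC^{SUYZ}}(p_*F)$; what it costs is a crucial unsubstantiated input, namely the microlocal (Dubson/Kashiwara) index formula $(\mathrm{CC^{KS}}(F),[df])_u = -\chi(\phi_{f}(F)_u)$ for an isolated characteristic point in the analytic setting, which is a nontrivial theorem (essentially \cite[Thm.\ 9.5.6]{Kashiwara-Schapira} together with an analytic Milnor-formula argument \emph{\`a la} Dubson or Ginzburg) and is not invoked anywhere in the paper. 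You also still need the generic-transversality argument producing an étale-local $f$ whose differential meets $\mathrm{SS}(F)$ at a single smooth point of $Y_i$ only, which Beilinson supplies but which must be quoted carefully. Your own flagged caveats about the $\mathbb{Q}_\ell$-passage and étale localization are real but are addressed by \cite[Thm.\ 5.17(3)]{Umezaki-Yang-Zhao-20}; the genuinely missing ingredient in the write-up is the analytic Milnor index theorem for $\mathrm{CC^{KS}}$, and until it is cited or proved, Step~2 does not stand.
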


\subsection*{Organization of the paper and the strategy of the proof} \label{subsec:organization-proof-strategy}
In \S \ref{sec:basics} we lay down the assumptions made in this article and discuss some basic terminology which will be needed in the later sections.
In \S \ref{subsec:analytic} we recall the definition of the singular support and the characteristic cycle of a complex of sheaves on a complex algebraic variety. We have taken the liberty of stating a theorem of Kashiwara-Schapira as the definition of the singular support.
Section \ref{subsec:algebraic} recalls the definition of the weak singular support given by Beilinson and the definition of the characteristic cycle given by Saito. The extension of these notions to complexes of sheaves with coefficients in $\mathbb{Z}_{\ell}$ or $\mathbb{Q}_{\ell}$ is also discussed there.\\

\noindent
A few (perhaps well known) statements whose proofs we were unable to find in the literature are needed in the sequel. The statements along with their proofs are stated as Lemma \ref{lemma:char-cycle-shriek}, \ref{lemma:char-cycle-equal-pushforward} and \ref{lemma:irreducible-jordan-quotient} in \S \ref{subsec:lemmas}.
In \S \ref{subsec:summary} we summarise various properties of the singular supports and of the characteristic cycles that are to be used in the proofs of Theorems \ref{thm:intro-SS} and \ref{thm:intro-CC}. The proofs of Theorem \ref{thm:intro-SS} and Theorem \ref{thm:intro-CC} are completed in \S \ref{sec:main-theorem-proof}.
The strategy of the proof of these theorems is to recursively use properties 1 and 2 from \S\ref{subsec:summary} in order to reduce to the case of irreducible perverse sheaves. Further Hironaka's resolution of singularities and the decomposition theorem with respect to support for perverse sheaves (Step 3 of proof of Theorem \ref{thm:main}) are used to reduce to the case of irreducible perverse sheaves of the form $j_!L$, where $L$ is a simple local system on $U$, and $j : U \hookrightarrow X$ is an open subset whose complement is a strict normal crossing divisor.
The theorem is achieved thereafter by an explicit computation (See \S \ref{sec:main-theorem-proof}, Step 4).\\

\noindent
Finally Appendix A we discusses the dependence of the characteristic cycle $\mathrm{CC^{KS}}$ on the coefficient system and Appendix B compares the two ways of extending of the notion of singular support developed by Umezaki-Yang-Zhao and Barrett.

\subsection*{Acknowledgements}
The author thanks K. V. Shuddhodan for suggesting the question and for numerous discussions. Thanks are due to Najmuddin Fakhruddin for directing the author to relevant reference in Fulton and for the accompanied explanation which forms the proof of Lemma \ref{lemma:cycle-class-functorial}.

\section{Basics} \label{sec:basics}
\subsection{Assumptions in this article}
In this article an algebraic variety will mean a separated smooth scheme of finite type over the field $\mathbb{C}$ of complex numbers. Any morphism of algebraic varieties that appear will automatically be of finite type.
By $X^{an}$ we will mean the complex points of an algebraic variety, and any morphism will be assumed to be one induced by a morphism of algebraic varieties.
To keep matters simple and the exposition lucid we restrict ourselves to the case of $\Lambda \in \lbrace \mathbb{Z}/\ell^n\mathbb{Z}, \mathbb{Z}_{\ell}, \mathbb{Q}_{\ell} \rbrace$.

\subsection{Constructible sheaves}
Notion of a constructible sheaf depends on the stratification of a space and the fundamental groups of these strata. Since the latter two notions differ in the settings of algebraic varieties and complex analytic varieties, the notion of a constructible sheaf differs as well.
We recall the basics of constructible sheaves which also serves the purpose of itroducing the relevant notation.\\

\noindent
Let $\Lambda$ be a noetherian ring. A sheaf $F$ valued in finitely generated $\Lambda$-modules on $X^{an}$ is called \textit{constructible} if there exists finitely many Zariski locally closed subsets $X^{an}_i \subset X^{an}$ such that, (1) $X^{an} = \underset{i}{\sqcup}X^{an}_i$, (2) the Zariski closure $\overline{X^{an}_i}$ of each $X^{an}_i$ is a union of finitely many of the $X^{an}_j$, and (3) $F|_{X^{an}_i}$ is a local system valued in finite $\Lambda$-modules. We can furthermore choose $X^{an}_i$ to be complex submanifolds of $X^{an}$(See \cite[Prop. 8.5.4]{Kashiwara-Schapira}).
Let $\mathrm{D}^b(X^{an}, \Lambda)$ be the bounded derived category of sheaves on $X^{an}$ valued in $\Lambda$-modules. The full subcategory $\mathrm{D}^b_{ctf}(X^{an}, \Lambda) \subset \mathrm{D}^b(X^{an}, \Lambda)$ is then defined by declaring that $F \in \mathrm{Ob}(\mathrm{D}^b(X^{an}, \Lambda))$ belongs to $\mathrm{D}^b_{ctf}(X^{an}, \Lambda)$ if $\mathrm{H}^i(F)$ is a constructible sheaf, and $\mathrm{H}^q(F \otimes^LQ) \neq 0$ for finitely many $q$ and any finite $\Lambda$-module $Q$.
The assumptions on the ring $\Lambda$ ensure that the conditions on $F$ in order to belong to the subcategory $\mathrm{D}^b_{ctf}(X^{an}, \Lambda)$ is equivalent to the condition that $F_m$, the stalk of $F$ at any $m \in M$, is a perfect complex of $\Lambda$-modules (thanks to the standard results \cite[Lemma 066E, Lemma 0658]{stacks-project}).
We will denote $\mathrm{D}^b_{ctf}(X^{an}, \Lambda)$ by $\mathrm{D}^b_c(X^{an}, \Lambda)$ henceforth.\\

\noindent
Let $\Lambda$ be a finite local ring. Let $F$ be a Zariski constructible \'{e}tale sheaf valued in finite $\Lambda$-modules on the algebraic variety $X$ that is $F$ is an \'{e}tale sheaf valued in finitely generated $\Lambda$-modules and there exists finitely many locally closed subvarieties $X_i \subset X$ such that (1) $X = \underset{i}{\sqcup}X_i$, (2) the Zariski closure $\overline{X_i}$ of each $X_i$ is a union of finitely many of the $X_j$, and (3) $F|_{X_i}$ is a \'{e}tale-locally constant sheaf valued in finite $\Lambda$-modules.
The Zariski locally closed subvarieties $X_i$ can always be chosen to be smooth. Let $\mathrm{D}^b(X_{\text{\'{e}t}}, \Lambda)$ denote the bounded derived category of \'{e}tale sheaves valued in finite $\Lambda$-modules.
The full subcategory $\mathrm{D}^b_{ctf}(X_{\text{\'{e}t}}, \Lambda) \subset \mathrm{D}^b(X_{\text{\'{e}t}}, \Lambda)$ is defined by declaring that $F \in \mathrm{Ob}(\mathrm{D}^b(X_{\text{\'{e}t}}, \Lambda))$ belongs to $\mathrm{D}^b_{ctf}(X_{\text{\'{e}t}}, \Lambda)$ if $\mathrm{H}^i(F)$ is a Zariski constructible \'{e}tale $\Lambda$-sheaf which is nonzero for atmost finitely many $i$, and $F \otimes^L_{\Lambda} Q \in \mathrm{D}^b(X_{\text{\'{e}t}}, \Lambda)$ for any finitely generated module $Q$ over $\Lambda$.
In the sequel we will denote $\mathrm{D}^b_{ctf}(X_{\text{\'{e}t}}, \Lambda)$ by $\mathrm{D}^b_c(X_{\text{\'{e}t}}, \Lambda)$.\\

\noindent
For $\Lambda = \mathbb{Z}_{\ell}$, the derived category $\mathrm{D}^b_c(X_{\text{\'{e}t}}, \mathbb{Z}_{\ell})$ is defined to be the category $2$-$\lim\mathrm{D}^b_c(X_{\text{\'{e}t}}, \Lambda/\ell^n)$ whose objects are projective systems $F = \lbrace F_n \rbrace_{n\geq 1}$ with $F_n \in \mathrm{D}^b_c(X_{\text{\'{e}t}}, \mathbb{Z}/\ell^n\mathbb{Z})$ such the induced map $F_{n+1}\otimes^L\mathbb{Z}/\ell^n\mathbb{Z} \cong F_n$ is an isomorphism, and the morphisms $f \in \hom_{\mathrm{D}^b_c(X_{\text{\'{e}t}}, \mathbb{Z}_{\ell})}(F, G)$ is a collection $\lbrace f_n \rbrace$ of morphisms, where $f_n \in \hom_{\mathrm{D}^b_c(X_{\text{\'{e}t}}, \mathbb{Z}/\ell^n\mathbb{Z})}(F_n, G_n)$ which renders the following diagram
%commutative diagram
\[
\begin{tikzcd}[column sep = 20ex]
	F_{n+1}\otimes^L\mathbb{Z}/\ell^n\mathbb{Z} \arrow[d, "\sim" {anchor=south, rotate=90}] \arrow[r, "f_{n+1}\otimes \mathbb{Z}/\ell^n\mathbb{Z}"] & G_{n+1}\otimes^L\mathbb{Z}/\ell^n\mathbb{Z} \arrow[d, "\sim" {anchor=north, rotate=90}]\\
	F_n \arrow[r, "f_n"'] & G_n
\end{tikzcd}
\]
to be commutative.
The $\ell$-adic derived category $\mathrm{D}^b_c(X_{\text{\'{e}t}}, \mathbb{Q}_{\ell})$ is obtained by inverting the multiplication by $\ell$ map on $\mathrm{D}^b_c(X_{\text{\'{e}t}}, \mathbb{Z}_{\ell})$.
The definition of $\mathrm{D}^b_c(X^{an}, \Lambda)$ makes sense without any further qualifications. Over the algebraically closed field $\mathbb{C}$, any \'{e}tale-locally constant sheaf on an algebraic variety $X$ over $\mathbb{C}$ valued in a finite ring $\Lambda$ gives a local system on $X^{an}$.
This allows us to associate to $F \in \mathrm{D}^b_{c}(X_{\text{\'{e}t}}, \mathbb{Z}_{\ell})$ $\left(\mathrm{resp.\,} F \in \mathrm{D}^b_{c}(X_{\text{\'{e}t}},\mathbb{Z}_{\ell})[\ell^{-1}]\right)$ an element in $\mathrm{D}^b_c(X^{an}, \mathbb{Z}_{\ell})$ (in $\mathrm{D}^b_{c}(X^{an}, \mathbb{Q}_{\ell})$) in a canonical manner. The associated object is denoted by $\varepsilon^*F$ in \S \ref{subsec:comparison}.

\subsection{Six functor formalism} \label{subsec:six-functor-formalism}
Let $X$ and $Y$ be algebraic varieties and $f : X \rightarrow Y$ be a morphism of algebraic varieties. Let $\Lambda \in \lbrace \mathbb{Z}/\ell^n\mathbb{Z}, \mathbb{Z}_{\ell}, \mathbb{Q}_{\ell} \rbrace$ we may associate to a morphism $f : X \rightarrow Y$ two pairs of adjoint functors
\[
(f^*,f_*), (f_!, f^!) : \begin{tikzcd} \mathrm{D}^b_c(Y_{\text{\'{e}t}}, \Lambda) \arrow[r, yshift=-.5ex] & \mathrm{D}^b_c(X_{\text{\'{e}t}}, \Lambda) \arrow[l, yshift=.5ex] &\text{and,}& (f^*,f_*), (f_!, f^!) : \mathrm{D}^b_c(Y^{an}, \Lambda) \arrow[r, yshift=-.5ex] & \mathrm{D}^b_c(X^{an}, \Lambda) \arrow[l, yshift=.5ex] \end{tikzcd}
\]
The adjointness of the pair $(f_!,f^!)$ is called as the Verdier duality which is a vast generalization of the Poincar\'{e} duality. The existence of the pair of adjoint functors maybe found in \cite[Cor. 2.2.2, Cor. 2.2.5]{Schurman-constructible} and \cite[Cor. 1.5]{Deligne-SGA4.5-finiteness}.\\

\noindent
Consider the triple $\begin{tikzcd} U \arrow[r, hook, "j"] & X & \arrow[l, hook', "i"'] Z \end{tikzcd}$, where $j$ is an open immersion and $i$ is a closed immersion. It is easy to check from the definitions that $i_*=i_!$, $j^*=j^!$ and $j^*i_* = 0$. Moreover, $i_*,j_*$, and $j_!$ are fully faithful.
For a sheaf $F \in \mathrm{D}^b_c(X_{\text{\'{e}t}}, \Lambda)$ (resp. $\mathrm{D}^b_c(X^{an}, \Lambda)$) we have the following distinguished triangles
\[
	\rightarrow j_!j^*F \rightarrow F \rightarrow i_*i^*F \xrightarrow{+1} \qquad\text{and,}\qquad \rightarrow i_*i^!F \rightarrow F \rightarrow j_*j^*F \xrightarrow{+1}
\]
in $\mathrm{D}^b_c(X_{\text{\'{e}t}}, \Lambda)$ (resp. $\mathrm{D}^b_c(X^{an}, \Lambda)$). The data described in this paragraph form a recollement of $\mathrm{D}^b_c(X_{\text{\'{e}t}}, \Lambda)$ (resp. $\mathrm{D}^b_c(X^{an}, \Lambda)$) in the sense of \cite[\S 1.4.3]{BBD}.

\subsection{Comparison of analytic and \'{e}tale topos} \label{subsec:comparison}
Recall from \cite[\S 6.1, 6.2]{BBD} that there is a morphism of topos $X^{an} \rightarrow X_{\text{\'{e}t}}$ which induces a fully faithful functor
\[
\begin{tikzcd}[row sep = .2ex]
	\varepsilon^* : \lbrace \text{Zariski constructible \'{e}tale } \Lambda\text{-sheaf} \rbrace \arrow[r] & \lbrace \text{Zariski constructible } \Lambda\text{-sheaf} \rbrace\\
	\varepsilon^* : \mathrm{D}^b_c(X_{\text{\'{e}t}}, \mathbb{Q}_{\ell}) \arrow[r] &\mathrm{D}^b_c(X^{an}, \mathbb{Q}_{\ell}).
\end{tikzcd}
\]
The essential image of the functor $\varepsilon^*$ consists of objects $F$ such that $\mathrm{H}^iF$ is the image of a Zariski constructible \'{e}tale $\Lambda$-sheaf under $\varepsilon^*$.
In fact, as noted in \cite{BBD} the functor is not an equivalence of categories.

\subsection{Vanishing cycles} \label{subsec:vanishing-cycles}
Let $X^{an}$ be a complex algebraic variety, $F \in \mathrm{D}^b_c(X^{an}, \Lambda)$ with $\Lambda \in \lbrace \mathbb{Z}/\ell^n\mathbb{Z}, \mathbb{Z}_{\ell}, \mathbb{Q}_{\ell} \rbrace$, and $f : X^{an} \rightarrow \mathbb{C}$ be a morphism.
Set $X^{an}_0 = f^{-1}(0)$ and denote by $i$ the closed embedding $i : X^{an}_0 \hookrightarrow X^{an}$. Let $\widetilde{p} : \widetilde{X} \rightarrow X^{an}$ be the pullback of $f$ along the composite $\widetilde{\mathbb{C}^{\times}} \rightarrow \mathbb{C}^{\times} \hookrightarrow \mathbb{C}$, where $\widetilde{\mathbb{C}^{\times}}$ is the universal cover of $\mathbb{C}^{\times}$.
The vanishing cycle $\phi^{an}_f(F)$ of $F$ with respect to the map $f$ is defined to be the cone $\mathrm{Cone}(i^*F \rightarrow i^*\widetilde{p}_*\widetilde{p}^*F)$. The superscript ${}^{an}$ has been added to the standard notation to remind us that the objects $f, X^{an}$, and $F$ are analytic in nature. The functor $\phi^{an}_f : \mathrm{D}^b(X^{an}, \Lambda) \rightarrow \mathrm{D}^b(X^{an}_0, \Lambda)$ preserves constructibility (\cite[Thm. 4.0.2]{Schurman-constructible}).\\

\noindent
Let $X$ be an algebraic variety and $f: X \rightarrow \mathbb{A}^1$ be a morphism of algebraic varieties. Let $\mathcal{O}^{sh}_{\mathbb{A}^1, \lbrace 0 \rbrace}$ denote the strict henselization of the local ring of $\mathbb{A}^1$ at the point $\lbrace 0 \rbrace$.
We continue to denote by $f$ the base change of $f$ via the map $\Spec(\mathcal{O}^{sh}_{\mathbb{A}^1, \lbrace 0 \rbrace}) \rightarrow \mathbb{A}^1$.
Let $\bar{s}$ and $\bar{\eta}$ respectively be a closed geometric and a generic geometric point of $\Spec(\mathcal{O}^{sh}_{\mathbb{A}^1, \lbrace 0 \rbrace})$. Let $i : X_{\bar{s}} \rightarrow X$ and $j : X_{\bar{\eta}} \rightarrow X$ respectively be the pullback of $\bar{s} \rightarrow \Spec(\mathcal{O}^{sh}_{\mathbb{A}^1, \lbrace 0 \rbrace})$ and $\bar{\eta} \to \Spec(\mathcal{O}^{sh}_{\mathbb{A}^1, \lbrace 0 \rbrace})$ along the morphism $f$.
The natural map $F \rightarrow j_*j^*F$ induces the map $i^*F \rightarrow i^*j_*j^*F$. The vanishing cycle $\phi^{alg}_f(F)$ of $F$ with respect to the map $f$ is defined to be the cone $\mathrm{Cone}(i^*F \rightarrow i^*j_*j^*F)$.
The superscript ${}^{alg}$ has been added to the standard notation to remind ourselves that the objects $f, X,$ and $F$ are algebraic in nature. The functor $\phi^{alg}_f : \mathrm{D}^b(X, \Lambda) \rightarrow \mathrm{D}^b(X_0, \Lambda)$ preserves constructibility.
This is given by \cite[Thm. 3.2]{Deligne-SGA4.5-finiteness} when $\Lambda = \mathbb{Z}/\ell^n \mathbb{Z}$ for some $n$. The case of $\Lambda = \mathbb{Z}_{\ell}$ follows from applying the aforementioned theorem for $\Lambda = \mathbb{Z}/\ell^n\mathbb{Z}$ for all $n \geq 1$ by the standard arguments, and finally the case of $\Lambda = \mathbb{Q}_{\ell}$ is immediate.\\

\noindent
When $X$ is an algebraic variety and $f : X \rightarrow \mathbb{A}^1$ is a morphism of algebraic varieties, we have two notions of vanishing cycles $\phi^{alg}_f(F)$ and $\phi^{an}_{f}(\varepsilon^*F)$. The morphism of topos $\varepsilon$ gives the canonical map
\[
	\mathrm{comp}_{\text{\'{e}t, Betti}} : \varepsilon^*(\phi^{alg}_f(F)) \rightarrow \phi^{an}_{f(\mathbb{C})}(\varepsilon^*(F)).
\]
The following comparison result for vanishing cycles is due to Deligne.
%theorem
\begin{theorem}[\cite{Deligne-Katz}, Expose XIV, Theorem 2.8] \label{thm:comp-vanishing-cycle}
The map $\mathrm{comp}_{\textup{\'{e}t, Betti}}$ is an isomorphism.
\end{theorem}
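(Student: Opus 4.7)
The plan is to reduce the statement to Artin's comparison theorem applied stalkwise at closed points of $X_0$. Since $\varepsilon^*$ is exact and commutes with $i^*$, $j^*$, and formation of cones, the map $\mathrm{comp}_{\text{\'{e}t, Betti}}$ is an isomorphism if and only if the induced comparison of nearby cycles $\varepsilon^*(i^*j_*j^*F) \to i^*\widetilde{p}_*\widetilde{p}^*\varepsilon^*F$ is one. Both sides are constructible on $X_0^{an}$, so it suffices to check on stalks at closed points $x \in X_0(\mathbb{C})$.

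By proper base change in the \'etale topology, the algebraic stalk at $x$ is
\[
(i^*j_*j^*F)_x \;\cong\; R\Gamma\bigl(X_{(x)} \times_{\Spec \mathcal{O}^{sh}_{\mathbb{A}^1,0}} \bar\eta,\, F\bigr),
\]
where $X_{(x)}$ denotes the strict henselization of $X$ at $x$. The analytic stalk at $x$ is the colimit, over shrinking contractible open neighborhoods $V$ of $x$ and punctured disks $D_\delta^\times \subset \mathbb{C}$ around $0$, of $R\Gamma(\widetilde{V \cap f^{-1}(D_\delta^\times)},\, \varepsilon^*F)$, i.e.\ the cohomology of the universal cover of the Milnor fibration of $f$ at $x$. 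I would then produce the comparison by choosing a cofinal system of \'etale neighborhoods $U \to X$ of $x$ on which $f$ is suitably standard, applying Artin's comparison theorem at each finite stage to obtain a natural isomorphism
\[
R\Gamma\bigl(U \times_{\mathbb{A}^1} D_\delta^\times,\, F\bigr) \;\cong\; R\Gamma\bigl((U \times_{\mathbb{A}^1} D_\delta^\times)^{an},\, \varepsilon^*F\bigr),
\]
and finally passing to the universal cover of $D_\delta^\times$ on both sides before taking the colimit in $U$ and $\delta$.

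The main obstacle is the interchange of Artin's comparison with two seemingly incompatible kinds of limit: the strict henselization is an inverse limit over pro-\'etale neighborhoods of $x$, whereas the Milnor fibration emerges from a filtered system of shrinking classical neighborhoods. I would handle this using local contractibility of complex algebraic varieties together with the finiteness properties of $F$ (constructible, finite tor-dimension), which ensure that the pro-system of \'etale cohomology groups on the algebraic side is essentially constant on each sufficiently small \'etale neighborhood and matches the hypercohomology of the corresponding tubular analytic neighborhood in the limit. Compatibility of the inertia action on the geometric generic fiber with the topological monodromy around $0$ is the last check, and reduces to the standard identification $\pi_1^{\text{\'{e}t}}(\mathbb{G}_{m,\mathbb{C}}) \cong \widehat{\mathbb{Z}}(1) \cong \widehat{\pi_1^{\text{top}}(\mathbb{C}^\times)}$, under which the two $\widetilde{\mathbb{C}^\times}$-torsors on the two sides are identified.
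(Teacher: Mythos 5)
The paper does not prove this statement; it is cited verbatim from Deligne's Exposé~XIV in SGA~$7_{\mathrm{II}}$, so there is no in-paper argument to compare against. Evaluating your sketch on its own terms: the overall strategy (reduce to nearby cycles, check stalkwise, invoke Artin's comparison theorem) is the right circle of ideas, and the last point about identifying the two $\widetilde{\mathbb{C}^\times}$-torsors via $\pi_1^{\text{\'et}}(\mathbb{G}_{m,\mathbb{C}}) \cong \widehat{\pi_1^{\mathrm{top}}(\mathbb{C}^\times)}$ is correct. But the middle of the argument has real gaps. First, you invoke ``proper base change'' to compute the stalk $(i^*j_*j^*F)_{\bar x}$, but this is simply the definition of the stalk of $Rj_*$ at a geometric point via strict henselization, not a base-change theorem; more importantly, the formula you write already hides a colimit over \'etale neighborhoods \emph{and} a colimit over finite subextensions of $\bar\eta$, and neither is acknowledged. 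Second, you propose to apply Artin's comparison ``at each finite stage'' to $U \times_{\mathbb{A}^1} D_\delta^\times$, but $D_\delta^\times$ is an analytic disk, not a variety, so Artin's theorem does not apply directly to that space; one must instead compare the cohomology of the algebraic geometric generic fiber with the cohomology of the analytic Milnor tube, and the bridge between these is exactly the nontrivial content of Deligne's theorem, not a formal consequence of Artin comparison.

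The paragraph where you ``handle the main obstacle'' is where a proof would have to live, and it does not. An appeal to ``local contractibility of complex algebraic varieties'' and ``finiteness properties of $F$'' does not by itself show that the pro-system of \'etale cohomology groups over strictly henselian neighborhoods is essentially constant and matches the Milnor-fiber cohomology: that statement needs the constructibility and finiteness of $R\Psi$ on both sides (itself a theorem), a compatible construction of the comparison map \emph{before} passage to stalks, and control over how the analytic tubes $V \cap f^{-1}(D_\delta^\times)$ interpolate the algebraic \'etale neighborhoods. Deligne's actual argument does not proceed purely by such a limit interchange; it uses a d\'evissage on $F$ together with resolution of singularities to reduce to an explicitly computable local model (constant coefficients, normal crossings), combined with compatibility of nearby cycles with proper pushforward. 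As written your proposal is a plausible plan of attack rather than a proof: the crux --- why the two limit processes commute with cohomology of a constructible complex --- is asserted, not established.
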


\noindent
Let $X$ and $S$ be algebraic varieties, $f : X \rightarrow S$ be a morphism, and $F$ be in $ \mathrm{D}^b_{ctf}(X_{\text{\'{e}t}}, \Lambda)$ with $\Lambda$ a finite local ring. Let $\bar{x} \in X(\mathbb{C})$ be a geometric point of $X$ and $f(\bar{x}) \in S(\mathbb{C})$ be the image of $\bar{x}$ under $f$.
The map $f$ induces a local homomorphism of local rings $\mathcal{O}_{S, f(\bar{x})} \rightarrow \mathcal{O}_{X, \bar{x}}$, and hence also a homomorphism on their strict henselizations $\mathcal{O}^{sh}_{S, f(\bar{x})} \rightarrow \mathcal{O}^{sh}_{X, \bar{x}}$. Let $f^{sh} : \Spec(\mathcal{O}^{sh}_{X, \bar{x}}) \rightarrow \Spec(\mathcal{O}^{sh}_{S, f(\bar{x})})$ be the induced map.
Let $M_{\bar{x}, \bar{s}}$ denote the fiber product $\Spec(\mathcal{O}^{sh}_{X,\bar{x}}) \times_{\Spec(\mathcal{O}^{sh}_{S, f(\bar{x})})} \bar{s}$ for a geometric point $\bar{s} \in \Spec(\mathcal{O}^{sh}_{S, f(\bar{x})})$. The canonical map $M_{\bar{x}, \bar{s}} \rightarrow \Spec(\mathcal{O}^{sh}_{X, \bar{x}})$ induces a pullback map
\[
	\alpha_{\bar{x}, \bar{s}} : \Gamma(\Spec(\mathcal{O}^{sh}_{X, \bar{x}}), F) \rightarrow \Gamma(M_{\bar{x}, \bar{s}}, F).
\]
We say that $F$ is locally acyclic with respect to $f$ if $\alpha_{\bar{x}, \bar{s}}$ are isomorphisms for all geometric points $\bar{x} \in X$ and a generization $\bar{s}$ of $f(\bar{x}) \in S$. In the situation when the algebraic variety $S$ is of dimension 1, we have the following well known lemma which relates the local acyclicity with the vanishing cycles.

%lemma
\begin{lemma}
Suppose that $X, S, f : X \rightarrow S$, and $F$ are as above. Then $F$ is locally acyclic with respect to the morphism $f \Leftrightarrow \phi_f(F) = 0$.
\end{lemma}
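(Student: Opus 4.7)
The plan is to compute $\phi_f^{alg}(F)$ stalkwise and observe that, by the very construction of the two objects, the stalk at a geometric point $\bar{x}$ equals the cone of the comparison map $\alpha_{\bar{x},\bar{\eta}}$ appearing in the definition of local acyclicity. Once this identification is in place the equivalence is essentially tautological.

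First I would dispense with the trivial generizations. When $\bar{s} = f(\bar{x})$, the scheme $M_{\bar{x},\bar{s}} = \Spec(\mathcal{O}^{sh}_{X,\bar{x}}) \times_{\Spec(\mathcal{O}^{sh}_{S,f(\bar{x})})} \bar{s}$ is the special fiber of $\Spec(\mathcal{O}^{sh}_{X,\bar{x}}) \to \Spec(\mathcal{O}^{sh}_{S,f(\bar{x})})$, which is again a strictly henselian local scheme with closed point $\bar{x}$. Sections of $F$ over such a scheme are computed by the stalk at $\bar{x}$, so $\alpha_{\bar{x},\bar{s}}$ is tautologically an isomorphism in this case. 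Because $\dim S = 1$, this already handles every pair $(\bar{x},\bar{s})$ except those in which $f(\bar{x})$ is a closed point $\bar{s}$ of $S$ and $\bar{s}$ is specialized by the generic geometric point $\bar{\eta}$ of $\Spec(\mathcal{O}^{sh}_{S,\bar{s}})$.

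Next, fixing such a closed point and working on $X^{sh} := X \times_S \Spec(\mathcal{O}^{sh}_{S,\bar{s}})$ with the inclusions $i : X_{\bar{s}} \hookrightarrow X^{sh}$ and $j : X_{\bar{\eta}} \hookrightarrow X^{sh}$ of the special and generic fibers, I would identify the two stalks at $\bar{x}$ that enter the definition $\phi_{f,\bar{s}}^{alg}(F) = \mathrm{Cone}(i^*F \to i^*j_*j^*F)$. The first term gives $(i^*F)_{\bar{x}} = F_{\bar{x}} = \Gamma(\Spec(\mathcal{O}^{sh}_{X,\bar{x}}), F)$. The second, $(j_*j^*F)_{\bar{x}}$, is by definition a filtered colimit of $\Gamma(U \times_{X^{sh}} X_{\bar{\eta}}, F)$ over étale neighborhoods $U$ of $\bar{x}$ in $X^{sh}$, and a standard cofinality argument (étale neighborhoods of $\bar{x}$ in $X^{sh}$ come cofinally from étale neighborhoods of $\bar{x}$ in $X$ base-changed to $X^{sh}$) identifies this colimit with $\Gamma(\Spec(\mathcal{O}^{sh}_{X,\bar{x}}) \times_{\Spec(\mathcal{O}^{sh}_{S,\bar{s}})} \bar{\eta}, F) = \Gamma(M_{\bar{x},\bar{\eta}}, F)$. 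By construction of the unit $F \to j_*j^*F$, the map induced on these two stalks is precisely $\alpha_{\bar{x},\bar{\eta}}$.

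Putting these together, a constructible complex vanishes iff all its stalks vanish, so $\phi_f^{alg}(F)$ being zero (at every closed point of $S$, which is the only place where it is supported) is equivalent to every $\alpha_{\bar{x},\bar{\eta}}$ being an isomorphism for $\bar{x}$ over a closed point and $\bar{\eta}$ the unique nontrivial generization; combined with the trivial-generization case already handled, this is exactly local acyclicity of $F$ with respect to $f$. The main obstacle is the stalk identification $(j_*j^*F)_{\bar{x}} = \Gamma(M_{\bar{x},\bar{\eta}}, F)$, a cofinality/henselization computation that is standard but requires some care with the fiber products; once it is in hand the remainder of the proof is bookkeeping.
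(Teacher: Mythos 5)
Your proof is correct and follows essentially the same route as the paper: identify the stalk of $j_*j^*F$ at $\bar{x}$ with $\Gamma\bigl(M_{\bar{x},\bar{\eta}}, F\bigr)$, so that the stalk of $\phi^{alg}_f(F)$ at $\bar{x}$ is the cone of $\alpha_{\bar{x},\bar{\eta}}$, and conclude by observing that $\phi^{alg}_f(F)=0$ iff all these cones vanish. The one place you diverge is in how the stalk identification is justified: the paper outsources it to Saito's result that nearby cycles commute with (finite) base change [Saito, Prop.\ 2.7], whereas you sketch the underlying passage-to-the-limit argument directly; be aware that what you call ``cofinality'' is really the continuity of \'{e}tale cohomology with torsion coefficients under filtered inverse limits of qcqs schemes with affine transition maps (cofinality of neighbourhoods is only the first step, after which one still must commute $R\Gamma$ past the limit), which is the precise content your argument needs.
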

%proof
\begin{proof}
$\phi_f(F) = 0 \Leftrightarrow i^*F \rightarrow i^*j_*j^*F$ is an isomorphism $\Leftrightarrow F \rightarrow j_*j^*F$ is an isomorphism. Taking stalks at a geometric point $\bar{x} \in X_{\bar{s}}$ we get
\[
	\Gamma(\Spec(\mathcal{O}^{sh}_{X, \bar{x}}, F) \cong (j_*j^*F)_{\bar{x}} \cong \Gamma(\Spec(\mathcal{O}^{sh}_{X, \bar{x}}), j_*j^*F) \cong \Gamma(\Spec(\mathcal{O}^{sh}_{X, \bar{x}})_{\bar{\eta}}, F).
\]
The isomorphism in the last step follows due to the property that nearby cycles commutes with a finite base change (See \cite[Prop. 2.7]{Saito}). The isomorphism of the two extreme terms precisely means that $F$ is locally acyclic with respect to the morphism $f$. The lemma follows.
\end{proof}

\noindent
In a more general setup than the one considered above, the given definition of local acyclicity has several issues such as, the sheaf of vanishing cycles may not be constructible\footnote{But it is constructible after a modification of the base (See \cite[Thm. 6.1]{Orgogozo-nearby-cycles})}, the vanishing cycles functor may not commute with the base change, and the data of the local acyclicity cannot be captured by a lemma as simple as above.
The appearance of non-finite type schemes such as the Milnor tubes can be circumvented by using the characterization of universally acyclic complex of sheaves developed by Lu-Zheng \cite{Lu-Zheng} and Hansen-Scholze \cite{Hansen-Scholze}.

\subsection{Cycle class} \label{subsec:cycle-class-functorial}
Let $X$ be an equidimensional algebraic variety. For a natural number $n$, let $\mathrm{CH}^n(X)$ denote the formal sum of irreducible algebraic varieties of $X$ of codimension $n$ upto rational equivalence. There is the following cycle class map
\[
	\mathrm{CH}^n(X) \xrightarrow{\text{cl}} \mathrm{H}^{2n}(X_{\text{\'{e}t}}, \mathbb{Q}_{\ell}),
\]
assigning to a closed subvariety $Y$ of $X$ of codimension $n$, a refined cycle class $[Y] \in \mathrm{H}^{2n}_{Y}(X_{\text{\'{e}t}}, \mathbb{Q}_{\ell})$ (See \cite[Ch. III, Def. 2, Remark 2]{ElZein}) which under the canonical map
$\mathrm{H}^{2n}_{Y}(X_{\text{\'{e}t}}, \mathbb{Q}_{\ell}) \rightarrow \mathrm{H}^{2n}(X_{\text{\'{e}t}}, \mathbb{Q}_{\ell})$ maps to $\text{cl}(Y)$. We will write $\mathrm{cl}$ for the refined cycle class map as well. Let $c$ be a correspondence as below
%commutative diagram
\[
\begin{tikzcd}
& C \arrow[dl, "f"'] \arrow[dr, "g"]&\\
X && Z
\end{tikzcd},
\]
where $C$ is an algebraic variety, $f$ is a locally complete intersection morphism and $g$ is proper. Then, $c_*$ the pushforward map along $c$ is defined to be the composite $g_*f^*$,
\[
	c_* : \mathrm{CH}^n(X) \rightarrow \mathrm{CH}^{n+\dim Z -\dim C}(Z).
\]
Similarly\footnote{Here we do not need the assumption of $f$ beiung lci.}, the maps
\[
	c_* : \mathrm{H}^{2n}(X_{\text{\'{e}t}}, \mathbb{Q}_{\ell}) \rightarrow \mathrm{H}^{2n+2\dim Z-2\dim C}(Z_{\text{\'{e}t}}, \mathbb{Q}_{\ell}) \qquad \text{ and } \qquad c_* : \mathrm{H}^{2n}_{Y}(X_{\text{\'{e}t}}, \mathbb{Q}_{\ell}) \rightarrow \mathrm{H}^{2n+2\dim Z-2\dim C}_{g\circ f^{-1}(Y)}(Z_{\text{\'{e}t}}, \mathbb{Q}_{\ell})
\]
is defined to be the composite $g_*f^*$.\\

\noindent
Recall that by our assumption the varieties $X, Z$, and $C$ are smooth. This in particular implies that the morphisms are locally complete intersections\footnote{abrreviated as lci in the rest of the document.}. The cycle class map is functorial with respect to the pushforward maps under correspondences, that is, the diagram below commutes.
%commutative diagram
\[
\begin{tikzcd}
\mathrm{CH}^n(X) \arrow[r, "\text{cl}"] \arrow[d, "c_*"] & \mathrm{H}^{2n}(X_{\text{\'{e}t}}, \mathbb{Q}_{\ell}) \arrow[d, "c_*"]\\
\mathrm{CH}^{n+\dim Z-\dim C}(Z) \arrow[r, "\text{cl}"] &\mathrm{H}^{2n+2\dim Z-2\dim C}(Z_{\text{\'{e}t}}, \mathbb{Q}_{\ell}).
\end{tikzcd}
\]
%lemma
\begin{lemma} \label{lemma:cycle-class-functorial}
Let $c$ be a correspondence as above,  $Y \subset X$ be an equidimensional Zariski closed subset with the reduced induced subscheme structure, let $c^0(Y)$ denote the Zariski closed subset $g\circ f^{-1}(Y)$, and let $n$ be the codimension of $Y$ in $X$. Then the refined cycle class map is also functorial. That is, the diagram below commutes.
%commutative diagram
\[
\begin{tikzcd}
\mathrm{CH}^0(Y) \arrow[r, "\mathrm{cl}"] \arrow[d, "c_*"] & \mathrm{H}^{2n}_{Y}(X_{\text{\'{e}t}}, \mathbb{Q}_{\ell}) \arrow[d, "c_*"]\\
\mathrm{CH}^{\dim Z-\dim C}(c^0(Y)) \arrow[r, "\mathrm{cl}"] &\mathrm{H}^{2n+2\dim Z-2\dim C}_{c^0(Y)}(Z_{\text{\'{e}t}}, \mathbb{Q}_{\ell})
\end{tikzcd}
\]
\end{lemma}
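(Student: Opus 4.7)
The plan is to split the assertion along the definition $c_* = g_{*} \circ f^{*}$, so that it suffices to check the compatibility of the refined cycle class map with (a) the lci pullback $f^{*}$, and (b) the proper pushforward $g_{*}$, separately. Concretely, I would insert the intermediate group $\mathrm{CH}^{n}(f^{-1}(Y))$ (and cohomologically $\mathrm{H}^{2n}_{f^{-1}(Y)}(C_{\text{\'et}},\mathbb{Q}_{\ell})$) in between, and then verify the commutativity of the two smaller squares
\[
\begin{tikzcd}
\mathrm{CH}^{0}(Y) \arrow[r,"\mathrm{cl}"] \arrow[d,"f^{*}"] & \mathrm{H}^{2n}_{Y}(X_{\text{\'et}},\mathbb{Q}_{\ell}) \arrow[d,"f^{*}"]\\
\mathrm{CH}^{n}(f^{-1}Y) \arrow[r,"\mathrm{cl}"] \arrow[d,"g_{*}"] & \mathrm{H}^{2n}_{f^{-1}Y}(C_{\text{\'et}},\mathbb{Q}_{\ell}) \arrow[d,"g_{*}"]\\
\mathrm{CH}^{n+\dim Z-\dim C}(c^{0}(Y)) \arrow[r,"\mathrm{cl}"] & \mathrm{H}^{2n+2\dim Z-2\dim C}_{c^{0}(Y)}(Z_{\text{\'et}},\mathbb{Q}_{\ell}).
\end{tikzcd}
\]

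For the pushforward square, compatibility of the refined cycle class with proper pushforward is a standard fact (see \cite{ElZein} and Fulton, Chapter 19): given a proper morphism $g : C \to Z$ between smooth varieties and a closed subset $W \subset C$, the diagram relating Gysin pushforward on Chow groups and the proper pushforward $g_{*} : \mathrm{H}^{*}_{W} \to \mathrm{H}^{*}_{g(W)}$ (which is defined via the base-change/adjunction machinery of the six functors applied to $g_{!}=g_{*}$) commutes, essentially because the fundamental class of a prime cycle is characterized by a local normalization at the generic point. For the pullback square, since $X$ and $C$ are both smooth, $f$ is lci; I would factor it as $f = p \circ \gamma_{f}$ where $\gamma_{f} : C \hookrightarrow C \times X$ is the graph (a regular closed immersion between smooth varieties) and $p : C \times X \to X$ is the smooth projection. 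Smooth pullback is flat, and the compatibility of $\mathrm{cl}$ with flat pullback on refined cycle classes is straightforward from the construction of both the algebraic and the étale fundamental classes via the local rings at generic points. For the regular embedding $\gamma_{f}$, the commutativity amounts to the compatibility of the algebraic Gysin map (as in Fulton) with the étale Gysin map, which is the content of the refined formulation of \cite[Ch.~III]{ElZein}.

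The main obstacle is the lci pullback square: unlike proper pushforward, the algebraic Gysin map is defined through deformation to the normal cone and is not literally a pullback of cycles set-theoretically, so one has to track the cycle class under this construction and match it with the étale Gysin map obtained from the purity isomorphism $\mathrm{R}\gamma_{f}^{!}\mathbb{Q}_{\ell} \simeq \mathbb{Q}_{\ell}[-2d](-d)$ on the normal bundle. Once the two Gysin maps are identified, both squares commute and the composite gives the lemma. This identification is precisely where I would invoke the explanation attributed to Fakhruddin in the acknowledgements, using the references in Fulton together with the étale fundamental class theory of \cite{ElZein}.
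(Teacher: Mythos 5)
Your decomposition of $c_* = g_*\circ f^*$ into the lci-pullback square and the proper-pushforward square, together with factoring the lci map $f$ through the graph embedding followed by a smooth (hence flat) projection and invoking Fulton Chapter 19 and El Zein for the two cases, is exactly the argument in the paper's proof. The paper cites \cite[Ch.~19, Lemma~19.2(a)]{Fulton-intersection} for the regular-embedding pullback, treats the flat projection case as clear, and cites \cite[Lemma~19.1.2]{Fulton-intersection} for the proper pushforward, precisely matching your outline.
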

%proof
\begin{proof}
The commutativity of the above diagram can be broken into functoriality of the refined cycle class map for 1. pullback along locally complete intersection morphism\footnote{Note that any morphism among smooth varieties is a locally complete intersection.} and 2. pushforward along proper morphism.
Note that any lci morphism can be factored as a composition of regular embedding followed by a projection map which is flat. The Functoriality of the cycle class map for pullbacks under the projection map is clear. See \cite[Chapter 19, Lemma 19.2(a)]{Fulton-intersection} for the functoriality of the cycle class maps for pullback via a regular embedding.
See \cite[Lemma 19.1.2]{Fulton-intersection} for functoriality of the cycle class map under pushforward along proper morphisms.
\end{proof}

\subsection{Closed conical subsets of cotangent bundle} \label{subsec:conic}
Let $X$ be an algebraic variety, then it's cotangent bundle $T^*X$ is again an algebraic variety equipped with a canonical 1-form $\omega$. With this choice $d\omega$ is a closed 2-form on $T^*X$ making it into a symplectic manifold.
A Zariski closed subset $C \subset T^*X$ is called a conical subset if it is stable under the obvious action of $\mathbb{G}_{m}$ on $T^*X$. The first and perhaps the example that is most pertinent to us is $\overline{T^*_SX}$, the closure in the Zariski topology of the conormal bundle of $X$ along a smooth locally closed subset in the Zariski topology $S \subset X$(See \cite[Prop. 8.4.1]{Kashiwara-Schapira}).
Let $C^{\mathrm{sm}} \subset C$ be the smooth locus of $C$ which is in fact dense in the Zariski topology. We call $C$ to be isotropic if $d\omega|_{C^{\text{sm}}} \equiv 0$, and $C$ is said to be involutive if for all $p \in C^{\mathrm{sm}}$, $T_pC$ is an involutive subset of $T_pT^*X$ for the symplectic pairing on $T_pT^*X$ induced by $\omega$. A subset $C$ is called lagrangian if $C$ is an isotropic and involutive subset.\\

\noindent
In the case of complex analytic variety $X^{an}$, the above paragraph must be read replacing Zariski topology, and $\mathbb{G}_{m}$ respectively by Euclidean topology and, $\mathbb{C}^{\times}$.

\section{Singular supports and characteristic cycles}

\subsection{Analytic} \label{subsec:analytic}
Let $\Lambda \in \lbrace \mathbb{Z}/\ell^n \mathbb{Z}, \mathbb{Z}_{\ell}, \mathbb{Q}_{\ell} \rbrace$ except in the last paragraph where we assume $\Lambda = \mathbb{Q}_{\ell}$.\\

\noindent
To any sheaf $F \in \mathrm{D}^b_c(X^{an}, \Lambda)$, Kashiwara-Schapira associates a closed conical isotropic involutive subset of $T^*X^{an}$ denoted in this article by $\mathrm{SS^{KS}}(F)$ (See \cite[Thm. 8.5.5]{Kashiwara-Schapira}).
The definition in the book perhaps cannot be seen to be immediately related to the definition of the singular supports due to Beilinson. We quote here a result from the book which resembles Beilinson's definition
%theorem
\begin{theorem-definition}[\cite{Kashiwara-Schapira}, Prop. 8.6.4] \label{thm:singular-support-KS}
Let $\pi :T^*X^{an} \rightarrow X^{an}$ be the cotangent bundle of $X^{an}$. The following are equivalent
\begin{enumerate}
	\item $p \in T^*X^{an}$ does not belong to the singular support of $F$.
	\item There exists an open neighbourhood $U$\footnote{$U$ is open subset in the Euclidean topology.} of $p$ and a holomorphic function $f$ defined on some open neighbourhood $V$ of $\pi(p)$ satisfying $f(\pi(p)) = 0$ and $df(\pi(p)) \in U$, such that $\phi^{an}_f(F)_{\pi(p)} = 0$.
\end{enumerate}
\end{theorem-definition}

\noindent
Kashiwara-Schapira further define a cycle which is a formal sum of closed conical Lagrangian subsets with certain integer coefficients as explained below. Let $\pi : T^*X^{an} \rightarrow X^{an}$ be the cotangent bundle of $X^{an}$, $p_1,p_2$ respectively be the first and second projections of $X^{an} \times X^{an} \rightarrow X^{an}$, and $\delta : X^{an} \rightarrow X^{an} \times X^{an}$ be the diagonal embedding.
The \text{characteristic class} $\mathrm{C}(F) \in \mathrm{H}^0_{\mathrm{supp}(F)}(X^{an}, \omega_{X^{an}})$ is defined to be the image of the identity map of $F$, denoted by $\mathrm{id}_F \in \hom(F, F)$, under the following composite
%align
\[
	\mathrm{R}\hom(F,F) \xrightarrow{\sim} \delta^!(F \boxtimes \mathrm{D}_{X^{an}}F) \rightarrow \delta^{*}(F \boxtimes D_{X^{an}}F) \xrightarrow{\sim} F \otimes D_{X^{an}}F \xrightarrow{tr} \omega_{X^{an}}.
\]
Here $\omega_X (=\Lambda_X[2\dim X])$ denote the dualizing sheaf of $X$. The above morphisms can be lifted to a map of sheaves on the cotangent bundle of $X^{an}$ using the technique of microlocalization. We refer the reader to \cite[\S 9.4]{Kashiwara-Schapira} for the definition of the characteristic cycle, and to \cite[Ch. IV]{Kashiwara-Schapira} for the definition of microlocalization. This allows us to write a morphism (See \cite[pp. 352]{Kashiwara-Schapira})
%equation
\begin{equation} \label{eqn:char-cycle-KS}
	\mathrm{R}\hom(F,F) \rightarrow R\pi_*R\Gamma_{\mathrm{SS}^{\mathrm{KS}}(F)}(\pi^{-1}\omega_{X^{an}}).
\end{equation}
The image of $\mathrm{id}_F \in \mathrm{R}\hom(F,F)$ under the above map is an equivalence class $\mathrm{H}^0_{\mathrm{SS^{KS}(F)}}(T^*X^{an}, \pi^{-1}\omega_{X^{an}})$, to be denoted by $\mathrm{CC^{KS}}(F)$.
We now make the assumption that $\Lambda$ is a field of characteristic 0.
Since $X^{an}$ is a complex manifold, we have a canonical isomorphism $\omega_{X^{an}} \simeq \Lambda_{X^{an}}[2\dim X^{an}]$. Denote by $\mathscr{CS}^{\bullet}(T^*X^{an})$ the sheaf of subanalytic chains in $T^*X^{an}$ (See \cite[\S 9.2]{Kashiwara-Schapira}). Using \cite[Prop. 9.2.6(iv)]{Kashiwara-Schapira} we get the following series of isomorphisms
\begin{equation} \label{eqn:lagrangian-cycle-cohomology-class}
	\begin{split}
	\mathrm{H}^0_{\mathrm{SS^{KS}(F)}}(T^*X^{an}, \Lambda_{T^*X^{an}}[2\dim X^{an}]) &= \mathrm{H}^{-2\dim X^{an}}_{\mathrm{SS^{KS}}(F)}(T^*X^{an}, \mathscr{CS}^{\bullet}(T^*X^{an})) = \mathscr{CS}^{T^*X^{an}}_{2\dim X^{an}}(\mathrm{SS^{KS}}(F))\\
								&\subset \Bigg\lbrace \sum a_iX^{an}_i \;\Bigg|\; \begin{aligned} &X^{an}_i \subset \mathrm{SS^{KS}}(F) \text{ locally closed subanalytic},\\ &\dim(X^{an}_i) = 2\dim X^{an} \text{ and }a_i \in \Lambda \end{aligned}\Bigg\rbrace.
	\end{split}
\end{equation}
The image of the cohomology class $\mathrm{CC^{KS}}(F)$ under the identification in \eqref{eqn:lagrangian-cycle-cohomology-class} will again be denoted by $\mathrm{CC^{KS}}(F)$. We will also need the following
%lemma
\begin{lemma} \label{lemma:support-analytic-cycle}
	For a perverse sheaf $F$, $\mathrm{CC^{KS}}(F) \geq 0$ and is supported on $\mathrm{SS^KS}(F)$, the singular support of $F$.
\end{lemma}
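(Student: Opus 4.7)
The support statement is essentially a tautology of the construction. The class $\mathrm{CC^{KS}}(F)$ is defined as the image of $\mathrm{id}_F$ inside $\mathrm{H}^0_{\mathrm{SS^{KS}}(F)}(T^*X^{an},\pi^{-1}\omega_{X^{an}})$, and the series of isomorphisms used in \eqref{eqn:lagrangian-cycle-cohomology-class} all respect the support condition, so the resulting $2\dim X^{an}$-dimensional subanalytic chain is automatically supported on $\mathrm{SS^{KS}}(F)$. Thus the first assertion requires no further argument.

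For effectivity the plan is to reduce the statement to a single multiplicity computation at a generic point of each irreducible component of $\mathrm{SS^{KS}}(F)$. I would first choose a $\mu$-stratification $\{S_\alpha\}$ of $X^{an}$ to which both $F$ and $\mathrm{SS^{KS}}(F)$ are adapted; then $\mathrm{SS^{KS}}(F)\subseteq\bigcup\overline{T^*_{S_\alpha}X^{an}}$ and, via the Lagrangian identification in \eqref{eqn:lagrangian-cycle-cohomology-class}, one has $\mathrm{CC^{KS}}(F)=\sum_\alpha m_\alpha\,[\overline{T^*_{S_\alpha}X^{an}}]$ for some integers $m_\alpha$. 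For a fixed stratum $S_\alpha$ I would pick a smooth point $x\in S_\alpha$ together with a holomorphic Morse function $f$ near $x$ whose differential $df(x)$ lies in the smooth locus of $\overline{T^*_{S_\alpha}X^{an}}$ and on no other conormal component. The microlocal index formula of Kashiwara--Schapira (\cite[\S 9.5]{Kashiwara-Schapira}, in the spirit of Theorem-Definition \ref{thm:singular-support-KS}) then identifies $m_\alpha$ with $\chi(\phi^{an}_f(F)_x)$ up to a sign depending only on $\dim_{\mathbb{C}} S_\alpha$ and on the perversity normalization.

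The one non-trivial input is then the well-known preservation of perversity by $\phi^{an}_f[-1]$. Since $F$ is perverse, so is $\phi^{an}_f(F)[-1]$; at the isolated Morse critical point $x$ this perverse sheaf is supported on the point $\{x\}$, and hence is concentrated in a single cohomological degree. Consequently $\phi^{an}_f(F)_x$ has cohomology in exactly one degree, and $\chi(\phi^{an}_f(F)_x)$ is $\pm\dim$ of that single vector space. Absorbing this sign into the sign appearing in the microlocal index formula turns $m_\alpha$ into the dimension of a genuine finite-dimensional vector space, hence non-negative.

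The main obstacle in executing this plan is the careful bookkeeping of signs, since the conventions for the perverse $t$-structure, for shifts in $\omega_{X^{an}}$, and for the orientation of Lagrangian chains used in \eqref{eqn:lagrangian-cycle-cohomology-class} must all be reconciled before the non-negativity is visible. Should this turn out more delicate than anticipated, I would instead invoke directly the effectivity statement for characteristic cycles of perverse sheaves recorded in \cite[Ch.~9]{Kashiwara-Schapira}, which is essentially the above argument carried out once and for all in the book.
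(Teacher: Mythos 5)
Your proof is correct, but it proceeds along a genuinely different route from the paper's. The paper transfers both assertions through the Riemann--Hilbert correspondence: it cites that $\mathrm{SS^{KS}}$ and $\mathrm{CC^{KS}}$ commute with the de Rham functor $\mathrm{DR}_{X^{an}}$ (Kashiwara--Schapira for $\mathrm{SS}$, Schmid--Vilonen for $\mathrm{CC}$), and then observes that for the corresponding holonomic $D$-module the characteristic cycle is supported on the characteristic variety with non-negative multiplicities by construction. You instead stay entirely on the topological side: you treat the support claim as tautological from the fact that $\mathrm{CC^{KS}}(F)$ is by definition a class in $\mathrm{H}^0_{\mathrm{SS^{KS}}(F)}(T^*X^{an},\pi^{-1}\omega_{X^{an}})$ and the identifications in \eqref{eqn:lagrangian-cycle-cohomology-class} land in chains supported on $\mathrm{SS^{KS}}(F)$, and you deduce effectivity from the microlocal index / Morse-theoretic formula together with the preservation of perversity by $\phi^{an}_f[-1]$, which forces the relevant stalk to be concentrated in a single degree. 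Both arguments are sound; yours is self-contained in the microlocal sheaf-theoretic framework of \cite{Kashiwara-Schapira}, whereas the paper's is shorter at the cost of invoking the Riemann--Hilbert correspondence and the Schmid--Vilonen comparison. One small point worth making explicit in your write-up: the corollary in \S\ref{sec:main-theorem-proof} uses the \emph{equality} $\mathrm{supp}\,\mathrm{CC^{KS}}(F)=\mathrm{SS^{KS}}(F)$, not merely the containment, so you should record (as your Morse computation in fact gives) that $m_\alpha>0$ for every conormal component $\overline{T^*_{S_\alpha}X^{an}}$ contained in $\mathrm{SS^{KS}}(F)$; this is exactly the content of the effectivity statements in \cite[Ch.~9]{Kashiwara-Schapira} that you invoke as a fallback.
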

%proof
\begin{proof}
The singular support commutes with the Riemann-Hilbert correspondence functor $\mathrm{DR}_{X^{an}}$ (See \cite[Theorem 11.3.3]{Kashiwara-Schapira}). The characteristic cycle commutes with $\mathrm{DR}_{X^{an}}$ as well (See \cite[pp.1115-1116]{Schmid-Vilonen}). But for holonomic $D$-modules the assertion that the characteristic cycle is supported on the characteristic variety is clear from the definition.
\end{proof}

\subsection{Algebraic} \label{subsec:algebraic}
We assume $\Lambda = \mathbb{Z}/\ell^n\mathbb{Z}$ in this section unless otherwise mentioned. In \cite{Beilinson-constructible} Beilinson defines the notion of the weak singular support of a constructible sheaf $F \in \mathrm{D}^b_c(X_{\text{\'{e}t}}, \Lambda)$ to be the smallest closed conical subsets $C$ of $T^*X$ satisfying the following : for every $C$-transversal\footnote{Recall that a pair $(h,f)$ is said to be $C$-\textit{transversal} if $df_x^{-1}(C_x) \setminus \lbrace 0 \rbrace = \emptyset$. Here $C_x$ denotes the set $C \cap T^*_xX$, and $df_x$ denotes the stalk at $x \in X$ of the morphism $df : T^*\mathbb{A}^1 \rightarrow T^*X$.}
test pair $(j,f)$ with $j : U \hookrightarrow X$ an open embedding, and a morphism $f : X \rightarrow \mathbb{A}^1$, $F|_U$ is locally acyclic with respect to the map $f$. Explicitly the weak singular support can also be described as the Zariski closure in $T^*X$ of the set
\[
	\lbrace (x, df(x)) \;|\; x \in X(\mathbb{C}) \text{ and } f \text{ is locally acyclic relative to $F$ at $x \in X(\mathbb{C})$} \rbrace.
\]
It is proved that $\mathrm{SS^B}(F)$ is a closed conical isotropic subset and each of it's irreducible components are of dimension $n = \dim (X)$ (See \cite[Prop. 2.2.7]{Saito-proper}).\\

%remark
\begin{remark}
\begin{enumerate}
	\item Let $f : U \rightarrow \mathbb{A}^1$ be a $\overline{\mathrm{SS^{KS}}(F)}^{Zar}$-transversal pair, then $df(x) \notin \overline{\mathrm{SS^{KS}}(F)}^{Zar}$ for any $x \in U$. Hence $\phi^{an}_f(F|_U) = 0$.
Using Theorem \ref{thm:comp-vanishing-cycle} we get that $\phi^{alg}_f(F|_U) = 0$. Thus, $F$ is microsupported on the Zariski closed subset $\overline{\mathrm{SS^{KS}}(F)}^{Zar}$. Hence the inclusion $\mathrm{SS^B}(F) \subset \overline{\mathrm{SS^{KS}}(F)}^{Zar}$ holds.
	\item For the sake of clarity we mention here that the notion of the weak singular support coincides with the notion of singular support as has been proved by Beilinson \cite[\S 1.5, Theorem]{Beilinson-constructible}.
\end{enumerate}
\end{remark}

\noindent
To any $F \in \mathrm{D}^b_{ctf}(X, \Lambda)$, Saito \cite{Saito} associates a cycle (not just a class!) supported on $\mathrm{SS^B}(F)$ with integer coefficients (See \cite[Prop. 5.18]{Saito}). We denote this cycle by $\mathrm{CC^S}(F)$.
More precisely if $\mathrm{SS^B}(F) = \underset{i}{\cup} C_i$ then $\mathrm{CC^S}(F) \colonequals \underset{i}{\sum} m_i[C_i]$ is such that for any triple $(j,f,u)$ in the set
\[
	\lbrace (	j: U \rightarrow X,f : U \rightarrow \mathbb{A}^1, u \in U(\mathbb{C})) \;|\; (j,f) \text{ is a test pair and}, (j|_{U \setminus u}, f|_{U\setminus u}) \text{ is a } C\text{-transversal test pair}. \rbrace,
\]
the equality
\begin{equation} \label{eqn:Milnor-formula}
	-\mathrm{tot}\dim(\phi^{alg}_u(j^*F, f)) = (\mathrm{CC^S}(F), df)_{T^*U, u} \tag{Milnor formula}
\end{equation}
holds. Here $\phi^{alg}_u(j^*F,f)$ denotes the vanishing cycle of $j^*F$ with respect to the morphism $f$ and $(\mathrm{CC^S}(F), df)_{T^*U, u}$ denotes the intersection of the cycle $\mathrm{CC^S}(F)$ with the graph of the map $df$ induced by the morphism $f$. The intersection number is well defined since $f$ is assumed to be transversal to $\mathrm{SS^B}(F)\setminus \lbrace u \rbrace$.
For a perverse sheaf $F$, the coefficients of the cycle $\mathrm{CC^S}(F)$ are nonnegative integers (\cite[Prop. 5.14]{Saito}).\\

\noindent
For the remaining part of this subsection we assume $\Lambda = \mathbb{Z}_{\ell}$. For any torsion free Zariski constructible \'{e}tale sheaf $\mathcal{F}$ with coefficients in $\Lambda$, there exists \'{e}tale sheaves $\mathcal{F}_n$ with coefficients in $\Lambda/\lambda^n$ such that $\mathcal{F}_n$ is flat over $\Lambda/\lambda^n$, and $\mathcal{F}_{n+1} \otimes_{\Lambda/\lambda^{n+1}} \Lambda/\lambda^n \cong \mathcal{F}_n$.
It follows from the definitions of the singular support and of the characteristic cycle that $\mathrm{SS^B}(\mathcal{F}_{n+1}) = \mathrm{SS^B}(\mathcal{F}_n)$ and $\mathrm{CC^S}(\mathcal{F}_{n+1}) = \mathrm{CC^S}(\mathcal{F}_n)$. Hence, it is meaningful to define
\[
	\mathrm{SS^B}(\lbrace \mathcal{F}_n \rbrace_n) \colonequals \mathrm{SS^B}(\mathcal{F}_0), \quad \mathrm{CC^S}(\lbrace \mathcal{F}_n \rbrace_n) \colonequals \mathrm{CC^S}(\mathcal{F}_0).
\]
It is proved in \cite[Prop. 5.9]{Umezaki-Yang-Zhao-20} that $\mathcal{F}$ is in fact mircosupported on $\mathrm{SS^B}(\mathcal{F})$ and that $\mathrm{CC^S}(\mathcal{F})$ satisfies the \eqref{eqn:Milnor-formula}.
For any $F \in \mathrm{D}^b_c(X_{\text{\'{e}t}}, \Lambda)[\lambda^{-1}]$, there exist torsion free sheaves $\mathcal{F}^i$ with coefficients in $\Lambda$ such that $\mathcal{F}^i\otimes_{\Lambda} \mathrm{Frac}(\Lambda) \cong \mathrm{H}^i(F)$.
Following a suggestion of Saito, Umezaki-Yang-Zhao \cite{Umezaki-Yang-Zhao-20} defines
\[
	\mathrm{CC^{SUYZ}}(F) \colonequals \sum_i (-1)^i \mathrm{CC^{S}}(\mathcal{F}^i), \text{ and} \quad	\mathrm{SS^{BUYZ}}(F) \colonequals \bigcup_i\mathrm{Supp}\left(\mathrm{CC^{SUYZ}}({}^p\mathrm{H}^i(F))\right).
\]
In the following proposition we list and indicate a quick proof of some expected properties of $\mathrm{SS^{BUYZ}}$ and $\mathrm{CC^{SUYZ}}$.
%proposition
\begin{prop} \label{prop:basic-properties-CC-SS-SUYZ}
With the notation as above, the following holds
\begin{enumerate}
	\item For any $F \in \mathrm{D}^b_c(X_{\textup{\'{e}t}}, \mathrm{Frac}(\Lambda))$, $\mathrm{SS^{BUYZ}}(F) = \bigcup_i \mathrm{SS^{BUYZ}}({}^p\mathrm{H}^i(F))$ and $\mathrm{CC^{SUYZ}}(F) = \sum_i \mathrm{CC^{SUYZ}}({}^p\mathrm{H}^i(F))$.
	\item For a perverse sheaf $F \in \mathrm{D}^b_c(X_{\textup{\'{e}t}}, \Lambda)[\lambda^{-1}]$, the coefficients of the cycle $\mathrm{CC^{SUYZ}}(F)$ are all nonnegative.
	\item Let $0 \rightarrow F_1 \rightarrow F_2 \rightarrow F_3 \rightarrow 0$ be an exact sequence of perverse sheaves in $\mathrm{D}^b_c(X_{\textup{\'{e}t}}, \Lambda)[\lambda^{-1}]$. Then $\mathrm{SS^{BUYZ}}(F_2) = \mathrm{SS^{BUYZ}}(F_1) \cup \mathrm{SS^{BUYZ}}(F_3)$ and $\mathrm{CC^{SUYZ}}(F_2) = \mathrm{CC^{SUYZ}}(F_1) + \mathrm{CC^{SUYZ}}(F_3)$.
	\item Suppose that $U \overset{j}{\hookrightarrow} X$ is an open subset of $X$ such that $Z = X \setminus U = \underset{i=1}{\overset{n}{\cup}} D_i$ is a normal crossing divisor. Set $D_I \colonequals \cap_{\lbrace i \in I \rbrace} D_i$ and $D_{\emptyset} \colonequals X$.
		Let $F$ be a locally constant constructible sheaf on $U$ valued in $\mathrm{Frac}(\Lambda)$ whose complement is $D$. Then $\mathrm{SS^{BUYZ}}(j_!F[\dim X]) = \underset{I \subset \lbrace 1, 2, \dots, n \rbrace}{\cup} T^*_{D_I}X$ and $\mathrm{CC^{SUYZ}}(j_!F[\dim X]) = \rank(F)\underset{I \subset \lbrace 1, 2, \dots, n \rbrace}{\sum}T^*_{D_I}X$.
\end{enumerate}
\end{prop}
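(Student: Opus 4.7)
The plan is to reduce each of the four statements to the corresponding properties of $\mathrm{CC^S}$ and $\mathrm{SS^B}$ at the level of a finite coefficient ring, by passing through the mod-$\lambda$ reduction of a torsion-free integral lift. The central technical input is the identity
\[
	\mathrm{CC^{SUYZ}}(\tilde F[\lambda^{-1}]) = \mathrm{CC^S}(\tilde F \otimes^L_{\mathbb{Z}_\ell} \mathbb{F}_\ell),
\]
valid whenever $\tilde F \in \mathrm{D}^b_c(X_{\text{\'{e}t}}, \mathbb{Z}_\ell)$ has all its ordinary cohomology sheaves torsion-free. It follows from additivity of $\mathrm{CC^S}$ on distinguished triangles in $\mathrm{D}^b_c(X_{\text{\'{e}t}}, \mathbb{F}_\ell)$ combined with the Bockstein short exact sequence $0 \to \mathcal{H}^i(\tilde F)/\lambda \to \mathcal{H}^i(\tilde F/\lambda) \to \mathcal{H}^{i+1}(\tilde F)[\lambda] \to 0$, which degenerates under the torsion-free hypothesis; the Umezaki--Yang--Zhao convention $\mathrm{CC^S}(\mathcal{H}^i(\tilde F)) := \mathrm{CC^S}(\mathcal{H}^i(\tilde F)/\lambda)$ then identifies both sides.

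For part (1), the singular-support equality is a tautology from the definition $\mathrm{SS^{BUYZ}}(F) := \bigcup_i \mathrm{Supp}(\mathrm{CC^{SUYZ}}({}^p\mathcal{H}^i(F)))$. For the cycle equality, choose a torsion-free integral lift $\tilde F$ of $F$ and apply perverse-cohomology additivity of $\mathrm{CC^S}$ to $\tilde F \otimes^L \mathbb{F}_\ell$; matching the perverse Postnikov decomposition in the Grothendieck group against the shift behaviour of $\mathrm{CC^{SUYZ}}$ (visible from its defining alternating sum) produces the formula.

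For part (2), choose a torsion-free integral perverse lift $\tilde F$ of $F$, available from the standard theory of perverse $\mathbb{Z}_\ell$-sheaves. Then $\tilde F \otimes^L \mathbb{F}_\ell$ is perverse over $\mathbb{F}_\ell$, so $\mathrm{CC^S}(\tilde F \otimes^L \mathbb{F}_\ell) \geq 0$ by the nonnegativity statement at the end of \S\ref{subsec:algebraic}, and the key identity transports nonnegativity to $\mathrm{CC^{SUYZ}}(F)$. Part (3) is analogous: lift the short exact sequence of perverse $\mathbb{Q}_\ell$-sheaves to a short exact sequence of torsion-free integral perverse lifts, reduce modulo $\lambda$ to a short exact sequence of perverse $\mathbb{F}_\ell$-sheaves, and invoke additivity of $\mathrm{CC^S}$ on such sequences. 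The corresponding additivity of $\mathrm{SS^{BUYZ}}$ then follows from the nonnegativity established in part (2), which prevents any cancellation of supports.

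For part (4), pick a Galois-stable $\mathbb{Z}_\ell$-lattice $\mathcal{L} \subset F$; then $j_!\mathcal{L}[\dim X]$ is a torsion-free integral perverse lift of $j_!F[\dim X]$, and the projection formula together with exactness of $j_!$ gives $(j_!\mathcal{L}[\dim X]) \otimes^L \mathbb{F}_\ell = j_!(\mathcal{L}/\lambda)[\dim X]$. Applying the key identity reduces the computation to $\mathrm{CC^S}(j_!(\mathcal{L}/\lambda)[\dim X])$, which is given by Saito's explicit formula for the characteristic cycle of a locally constant sheaf on an open complement of a strict normal crossings divisor, yielding exactly $\rank(\mathcal{L}) \sum_I T^*_{D_I}X$; the asserted singular-support formula is its support. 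The main obstacle will be tightening up the sign bookkeeping in part (1) and verifying the key identity, both of which reduce to careful but standard manipulations with Bockstein sequences and the shift conventions of characteristic cycles.
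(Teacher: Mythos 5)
Your approach to parts (1), (3), and (4) tracks what the paper does (parts (1) and (3) are essentially formal from the definition plus additivity of $\mathrm{CC^S}$; part (4) ultimately rests on Saito's explicit normal-crossings computation), but for part (2) you take a genuinely different route. The paper does \emph{not} reduce mod $\lambda$: it cites \cite[Thm. 5.17(3)]{Umezaki-Yang-Zhao-20} that $\mathrm{CC^{SUYZ}}$ satisfies the Milnor formula, and then invokes Beilinson's positivity criterion \cite[\S 4.9(i)]{Beilinson-constructible} in the refined form of \cite[Lemma 4.10]{Saito} to deduce nonnegativity directly. Your mod-$\lambda$ reduction is a reasonable alternative, but it avoids the Milnor-formula machinery only at the cost of the technical ``key identity,'' which introduces a gap.

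The gap is this: you state the key identity $\mathrm{CC^{SUYZ}}(\tilde F[\lambda^{-1}]) = \mathrm{CC^S}(\tilde F \otimes^L \mathbb{F}_\ell)$ under the hypothesis that $\tilde F$ has $\lambda$-torsion-free \emph{ordinary} cohomology sheaves, but in parts (2) and (3) you apply it to a ``torsion-free integral \emph{perverse} lift.'' These two conditions are not the same. A perverse $\mathbb{Z}_\ell$-sheaf on which $\lambda$ acts injectively in the perverse heart (the natural meaning of ``torsion-free perverse lift,'' and the condition guaranteeing $\tilde F/\lambda$ is perverse) can perfectly well have $\lambda$-torsion in its standard cohomology sheaves $\mathcal{H}^i(\tilde F)$, so the Bockstein short exact sequence need not degenerate, and your stated proof of the key identity does not apply. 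The good news is that the identity is in fact true without the torsion-freeness hypothesis: writing each $\mathcal{H}^i(\tilde F)$ as an extension of its torsion-free quotient by its torsion $T^i$, the four-term sequence $0 \to T^i[\lambda] \to T^i \xrightarrow{\lambda} T^i \to T^i\otimes\mathbb{F}_\lambda \to 0$ and additivity of $\mathrm{CC^S}$ give $\mathrm{CC^S}(T^i[\lambda]) = \mathrm{CC^S}(T^i\otimes\mathbb{F}_\lambda)$, so the torsion contributions telescope away in the alternating sum. You should either prove the key identity in this generality, or else for part (2) work directly with torsion-free lifts of the standard cohomology sheaves (as in the definition of $\mathrm{CC^{SUYZ}}$) rather than a single perverse lift — or simply follow the paper and argue through the Milnor formula. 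As written, the proposal has a genuine missing step.
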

%proof
\begin{proof}
\begin{enumerate}
	\item The equality on the singular supports and the characteristic cycles is clear from the definition.
	\item We know from \cite[Thm. 5.17(3)]{Umezaki-Yang-Zhao-20} that $\mathrm{CC^{SUYZ}}$ satisfies the Milnor number formula. We may use \cite[\S 4.9(i)]{Beilinson-constructible}, more precisely it's refinement \cite[Lemma 4.10]{Saito} to conclude that the coefficients are nonnegative.
	\item The equality on the characteristic cycles follows from the definition. The equality on the singular supports follows from the definition and part (2).
	\item Note that $j_!$ is $t$-exact and hence, $j_!F[\dim X]$ is also perverse. In this case the equality $\mathrm{SS^{BUYZ}}(j_!F) = \mathrm{Supp}(\mathrm{CC^{SUYZ}}(j_!F))$ follows from the definitions of $\mathrm{SS^{BUYZ}}$ and $\mathrm{CC^{BUYZ}}$.
		Thus it is enough to prove that $\mathrm{CC^{SUYZ}}(j_!F[\dim X]) = \rank(F)\underset{I \subset \lbrace 1, 2, \dots, n \rbrace}{\sum}T^*_{D_I}X$. This follows by putting together \cite[Thm. 5.17(1)]{Umezaki-Yang-Zhao-20} and \cite[Prop. 4.11]{Saito}.
\end{enumerate}
\end{proof}

\subsection{Three lemmas} \label{subsec:lemmas}
In this subsection we assume that $\Lambda = \mathbb{Q}_{\ell}$.\\

\noindent
Consider the following diagram $\begin{tikzcd} U \arrow[r, hook, "j"] & X & D \arrow[l, hook', "i"'] \end{tikzcd}$ where $j$ is an open immersion and $i$ is a closed immersion such that $D = \underset{i=1}{\overset{r}{\cup}} D_i$ is a strict normal crossing divisor.
Let $F$ be a local system on the open set $U^{an}$, then $j_{!}F$ is a perverse sheaf on $X^{an}$ (See \cite[Cor. 4.1.10]{BBD}).
%lemma
\begin{lemma} \label{lemma:char-cycle-shriek}
Let $X, D$, and $U$ be as above. Define $D_I \colonequals \cap_{\lbrace i \in I \rbrace} D_i$ and $D_{\emptyset} \colonequals X$. Then, for any $F \in \mathrm{D}^b_c(X^{an}, \Lambda)$, we have
\[
	\mathrm{CC^{KS}}(j_!F) = (-1)^{\dim_{\mathbb{C}} X^{an}} \rank(F|_U) \underset{I\subset \lbrace 1, 2, \dots, r \rbrace}{\sum}T^*_{D^{an}_I}X^{an}.
\]
\end{lemma}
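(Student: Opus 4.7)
The plan is to compute $\mathrm{CC^{KS}}(j_!F)$ in two stages: first, reduce to the constant rank-one sheaf $F = \Lambda_U$; second, evaluate by inclusion-exclusion using the open-closed distinguished triangle.

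\emph{Reduction to $F = \Lambda_U$.} I would first argue that $\mathrm{CC^{KS}}(j_!F)$ depends on the local system $F$ only through $\rank(F|_U)$. Since $\mathrm{CC^{KS}}$ is a local invariant and is characterized at a generic point of each stratum by the Euler characteristic of the vanishing cycles of $j_!F$ along a suitable test function, it suffices to check this claim at a generic point $p \in D_I^{\circ} := D_I \setminus \bigcup_{I' \supsetneq I} D_{I'}$. In a local analytic chart around $p$, the NCD is put in standard form $D_i = \{z_i = 0\}$ for $i \in I$, a normal slice is $\mathbb{D}^{|I|}$ with $\mathbb{D}^{|I|} \cap U = (\mathbb{D}^*)^{|I|}$, and $j_!F$ restricted to this slice is the extension by zero of a rank-$n$ representation of $\pi_1((\mathbb{D}^*)^{|I|}) \cong \mathbb{Z}^{|I|}$. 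The space of such representations is path-connected, and $\mathrm{CC^{KS}}$ is constant along such families by its local Morse-theoretic description, so the problem reduces to $F = \Lambda_U^{\oplus n}$; additivity of $\mathrm{CC^{KS}}$ on direct sums then gives $\mathrm{CC^{KS}}(j_!F) = n \cdot \mathrm{CC^{KS}}(j_!\Lambda_U)$.

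\emph{Computation for $F = \Lambda_U$.} Consider the open-closed distinguished triangle
\[
j_! \Lambda_U \longrightarrow \Lambda_X \longrightarrow i_* \Lambda_D \xrightarrow{+1}.
\]
By additivity of $\mathrm{CC^{KS}}$ on distinguished triangles,
\[
\mathrm{CC^{KS}}(j_! \Lambda_U) = \mathrm{CC^{KS}}(\Lambda_X) - \mathrm{CC^{KS}}(i_* \Lambda_D).
\]
The first term equals $(-1)^{\dim X}[T^*_X X]$ by the standard formula for the constant sheaf on a smooth variety. For the second, the \v{C}ech resolution of $\Lambda_D$ with respect to the closed cover $D = \bigcup_{i=1}^{r} D_i$ provides a quasi-isomorphism
\[
\Lambda_D \simeq \Big[ \bigoplus_{|I|=1} (i_I)_* \Lambda_{D_I} \longrightarrow \cdots \longrightarrow \bigoplus_{|I|=r} (i_I)_* \Lambda_{D_I} \Big],
\]
where $i_I : D_I \hookrightarrow X$. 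Additivity of $\mathrm{CC^{KS}}$ then yields
\[
\mathrm{CC^{KS}}(i_* \Lambda_D) = \sum_{\emptyset \neq I \subset \{1,\ldots,r\}} (-1)^{|I|-1}\, \mathrm{CC^{KS}}((i_I)_* \Lambda_{D_I}),
\]
and since each $D_I$ is smooth of codimension $|I|$, one has $\mathrm{CC^{KS}}((i_I)_* \Lambda_{D_I}) = (-1)^{\dim X - |I|}[T^*_{D_I}X]$. The product of signs $(-1)^{|I|-1}(-1)^{\dim X - |I|}$ collapses to $-(-1)^{\dim X}$, independent of $I$.

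\emph{Assembly.} With the convention $D_\emptyset = X$, substituting back gives
\[
\mathrm{CC^{KS}}(j_! \Lambda_U) = (-1)^{\dim X}\Big( [T^*_X X] + \sum_{\emptyset \neq I} [T^*_{D_I} X] \Big) = (-1)^{\dim X}\sum_{I \subset \{1,\ldots,r\}} [T^*_{D_I} X],
\]
which is the claimed formula for $F = \Lambda_U$, and hence by the reduction for any rank-$n$ local system on $U$.

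The main obstacle is the rank-only dependence asserted in the first step. Its rigorous verification requires an explicit Morse-theoretic computation of $\chi(\psi_g^{an}(j_!L))$ at a generic point of each $D_I^{\circ}$ for $L$ a rank-$n$ local system on $(\mathbb{D}^*)^{|I|}$; the product structure of the local NCD model reduces this to the one-dimensional case, where the stalk of $\psi_g^{an}(j_!L)$ at $0 \in \mathbb{D}$ is a vector space of dimension $\rank(L)$ on which the local monodromy acts. Alternatively, one could bypass this reduction by invoking the Riemann-Hilbert correspondence and computing the characteristic variety of the regular holonomic $D$-module associated with $j_!L$ directly from Deligne's canonical extension.
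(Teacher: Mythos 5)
Your Mayer--Vietoris computation for the constant sheaf is correct and is a genuinely different route from the paper's. The paper reduces to the local model $X^{an}=\mathbb{C}^n$, $D=\{z_1\cdots z_r=0\}$, reduces to $\mathbb{C}$-coefficients, then to \emph{irreducible} local systems by additivity (these are automatically one-dimensional since $\pi_1((\mathbb{C}^\times)^r)\cong\mathbb{Z}^r$ is abelian and $\mathbb{C}$ is algebraically closed), and then uses the external-product formula $\mathrm{CC^{KS}}(A\boxtimes B)=\mathrm{CC^{KS}}(A)\boxtimes\mathrm{CC^{KS}}(B)$ to collapse to the one-variable case $j_!L$ on $\mathbb{C}$; there the trivial $L$ is handled by the open--closed triangle and the nontrivial $L$ by a duality diagram chase exploiting $j_!L\cong j_*L$. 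Your \v{C}ech resolution of $\Lambda_D$ along the closed cover $D=\bigcup D_i$ together with additivity avoids both the external-product formula and the reduction to a local chart; this is a cleaner way to treat the constant sheaf, and the sign bookkeeping $(-1)^{|I|-1}(-1)^{\dim X-|I|}=-(-1)^{\dim X}$ is right.

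The weak point is the reduction to $F=\Lambda_U$, and you have correctly flagged it yourself. Two objections: first, path-connectedness of the representation variety $\mathrm{Hom}(\mathbb{Z}^{|I|},GL_n(\mathbb{C}))$ (the variety of commuting $|I|$-tuples of invertible matrices) is not obvious --- irreducibility/connectedness of commuting varieties is a genuinely subtle question and should not be invoked without reference. Second, even granting connectedness, ``$\mathrm{CC^{KS}}$ is constant along such families'' is not a freestanding principle: $\mathrm{CC^{KS}}$ lands in a discrete group, so constancy on a connected parameter space is exactly what needs proof, and in general characteristic cycles do jump in families (indeed $\mathrm{SS^{KS}}(j_*L)$ depends on the monodromy of $L$). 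What makes it work here is the $j_!$: the microlocal multiplicity at a generic point of $T^*_{D_I^{an}}X^{an}$ is the total dimension of a vanishing-cycle complex whose stalk at the origin of the transverse slice is simply $L$ shifted (since the stalks of $j_!L$ on $D$ vanish), so the multiplicity is $\mathrm{rank}(L)$ by inspection --- no deformation needed. A tighter repair, parallel to the paper, is to reduce by additivity (\S\ref{subsec:summary}(2)) to rank-one $L$; then either deform the monodromy character inside $(\mathbb{C}^\times)^{|I|}$ (which \emph{is} manifestly connected) after supplying the local-constancy argument, or argue directly via the vanishing-cycle stalk as above, or run the paper's diagram chase using $j_!L\cong j_*L$ for nontrivial rank-one $L$.
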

%proof
\begin{proof}
Since the question is local (in fact microlocal) we may assume that $X^{an} = \mathbb{C}^n$, $D = \lbrace z_1z_2\dots z_r = 0 \rbrace$, and $F = j_!L$, where $L$ is a $\mathbb{Q}_{\ell}$-local system on $(\mathbb{C}^{\times})^r\times \mathbb{C}^{n-r}$.
It is clear from the definition of $\mathrm{CC^{KS}}$ that $\mathrm{CC^{KS}}(F) = \mathrm{CC^{KS}}(F \otimes_{\Lambda} \mathbb{C})$. Thus we may further assume $F = j_!(L\otimes \mathbb{C})\in \mathrm{D}^b_c(X^{an}, \mathbb{C})$.
Since $\mathrm{CC^{KS}}$ is additive under triangles (See \S\ref{subsec:summary}(2)), in addition we may assume that $L$ is an irreducible local system with complex coefficients. So we may write $L = L_1 \boxtimes \dots \boxtimes L_r$\footnote{Note that all the irreducible representations of $\pi_1((\mathbb{C}^{\times})^r) = \mathbb{Z}^r$ over an algebraically closed field is one dimensional and is product of one dimensional representation.}
for certain irreducible local system $L_i$ of $\mathbb{C}^{\times}$. Let $j_r : \mathbb{C}^{\times} \hookrightarrow \mathbb{C}$ be the restriction of $j$ to the $r$-th coordinate, then $j^* = j^*_1\times \dots \times j^*_r$.
Since $j_!$ is left adjoint to $j^* = j^*_1\times \dots \times j^*_r$, $j_!L = j_{1!}L_1\boxtimes j_{2!}L_2 \boxtimes \cdots \boxtimes j_{r!}L_r$. We know from \cite[pp. 378]{Kashiwara-Schapira} that $\mathrm{CC^{KS}}(j_!L) = \mathrm{CC^{KS}}(j_{1!}L_1)\boxtimes \cdots \boxtimes \mathrm{CC^{KS}}(j_{r!}L_r)$.
Supposing we also know that $\mathrm{CC^{KS}}(j_{t!}L_t) = -\rank(L_t)([T^*_{\mathbb{C}}\mathbb{C}] + [T^*_{\lbrace 0 \rbrace} \mathbb{C}])$, we get
\[
	\mathrm{CC^{KS}}(j_!L) = (-1)^{\dim X}\rank(L)([T^*_{\mathbb{C}}\mathbb{C}] + [T^*_{\lbrace 0 \rbrace} \mathbb{C}]) \boxtimes \cdots \boxtimes([T^*_{\mathbb{C}}\mathbb{C}] + [T^*_{\lbrace 0 \rbrace} \mathbb{C}]) = (-1)^{\dim X}\rank(L) \underset{I\subset \lbrace 1, \dots, r \rbrace}{\sum} T^*_{D_I}X.
\]
It remains to prove the equality $\mathrm{CC^{KS}}(j_{!}L) = -\rank(L)([T^*_{\mathbb{C}}\mathbb{C}] + [T^*_{\lbrace 0 \rbrace} \mathbb{C}])$ where $L$ is a one dimensional local system with coefficients in $\mathbb{C}$.
We first prove this when $L$ is a trivial local system. Applying 1 of \S \ref{subsec:summary} to the triangle $j_{!}j^*\mathbb{C} \rightarrow \mathbb{C} \rightarrow i_{*}i^*\mathbb{C}$ we get
\[
	\mathrm{CC}^{KS}(j_{!}\mathbb{C}) = \mathrm{CC^{KS}}(\mathbb{C}) - \mathrm{CC^{KS}}(i_*\mathbb{C}) = -[T^*_{\mathbb{C}}\mathbb{C}] - [T^*_{\lbrace 0 \rbrace} \mathbb{C}].
\]
Thus proving the claim in this case. If $L$ is a nontrivial local system then the canonical map $j_!L \rightarrow j_*L$ is an isomorphism. In this case we have the following commutative diagram
%commutative diagram
\[
\begin{tikzcd}
	& j_!(L \otimes L^{\vee}) \arrow[r, "\sim"] & j_!L\otimes j_*L^{\vee} \arrow[r, "\sim"] & j_!L \otimes D(j_!L) \arrow[dr]\\
	j_!\mathbb{C} \arrow[ur, "\sim"] \arrow[dr, "\sim"] &&&& \omega_{X^{an}}\\
	& j_!(\mathbb{C} \otimes \mathbb{C}) \arrow[r, "\sim"] & j_!(\mathbb{C}) \otimes j_*(\mathbb{C}) \arrow[r, "\sim"] & j_!(\mathbb{C}) \otimes D(j_!\mathbb{C}) \arrow[ur]
\end{tikzcd}
\]
Chasing the image of the identity morphism in $\mathrm{R}\hom(j_!L, j_!L)$ and $\mathrm{R}\hom(j_!\mathbb{C}, j_!\mathbb{C})$ in the sequence of arrows in the definition of characteristic cycle (See \cite[\S 9.4]{Kashiwara-Schapira}) we conclude using the diagram above that their images in $R\pi_*R\Gamma_{\mathrm{SS}}(\pi^{-1}\omega_{X^{an}})$ coincides. Hence $\mathrm{CC^{KS}}(j_!L) = \mathrm{CC^{KS}}(j_!\mathbb{C}) = -[T^*_{\mathbb{C}}\mathbb{C}]-[T^*_{\lbrace 0 \rbrace}\mathbb{C}]$.
This finishes the proof of the lemma.
\end{proof}

%lemma
\begin{lemma} \label{lemma:char-cycle-equal-pushforward}
Let $p : Y \rightarrow X$ be a projective morphism of algebraic varieties and $F \in \mathrm{D}^b_c(Y_{\textup{\'{e}t}}, \Lambda)$. Assume $\Lambda \in \lbrace \mathbb{Z}/\ell^n \mathbb{Z}, \mathbb{Z}_{\ell} \rbrace$ and $\mathrm{CC^{KS}}(F) = \mathrm{CC^{S}}(F)$, or $\Lambda = \mathbb{Q}_{\ell}$ and $\mathrm{CC^{KS}}(F) = \mathrm{CC^{SUYZ}}(F)$. Then $\mathrm{CC^{KS}}(p_*F) = \mathrm{CC^{SUYZ}}(p_*F)$.
\end{lemma}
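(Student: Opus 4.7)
The plan is to deploy proper pushforward formulas for each characteristic cycle and combine them with the hypothesis on $Y$. For the projective morphism $p : Y \to X$ of smooth algebraic varieties, we have the standard cotangent correspondence
\[
T^*Y \xleftarrow{\,dp\,} Y \times_X T^*X \xrightarrow{\,p_\pi\,} T^*X,
\]
in which $dp$ is a closed regular embedding (because $Y$ and $X$ are smooth, so $p$ is automatically lci in the sense of \S\ref{subsec:cycle-class-functorial}) and $p_\pi$ is proper (because $p$ is projective). Let $c_{\ast}$ denote the resulting pushforward $(p_\pi)_* (dp)^*$ on cycles or on cohomology classes.

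The next step is to recall, on each side, a proper pushforward formula expressed through this same correspondence. On the analytic side, the Kashiwara-Schapira formula \cite[Prop.~9.4.2]{Kashiwara-Schapira} gives
\[
\mathrm{CC^{KS}}(p_* F) \;=\; c_{\ast}\,\mathrm{CC^{KS}}(F)
\]
as Lagrangian cycle classes in $\mathrm{H}^0_{\mathrm{SS^{KS}}(p_* F)}(T^*X^{an}, \pi^{-1}\omega_{X^{an}})$; since $p$ is projective the analytified map is proper, so the hypotheses of the theorem are met. On the algebraic side, Saito's index-type formula for proper pushforward \cite{Saito} (extended to the $\ell$-adic setting in \cite{Umezaki-Yang-Zhao-20}) gives the analogous identity
\[
\mathrm{CC^{SUYZ}}(p_* F) \;=\; c_{\ast}\,\mathrm{CC^{SUYZ}}(F)
\]
as algebraic cycles on $T^*X$.

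The conclusion is then a matter of comparing these two identities. A priori they live in different places: the first in a localized cohomology group, the second in a Chow group. However, Lemma \ref{lemma:cycle-class-functorial} is precisely the statement that the refined cycle class map intertwines $c_{\ast}$ on cycles with $c_{\ast}$ on cohomology classes. Applying $\mathrm{cl}$ to the Saito identity and using the hypothesis $\mathrm{cl}(\mathrm{CC^{SUYZ}}(F)) = \mathrm{CC^{KS}}(F)$, one obtains
\[
\mathrm{cl}\bigl(\mathrm{CC^{SUYZ}}(p_* F)\bigr) \;=\; c_{\ast}\,\mathrm{cl}\bigl(\mathrm{CC^{SUYZ}}(F)\bigr) \;=\; c_{\ast}\,\mathrm{CC^{KS}}(F) \;=\; \mathrm{CC^{KS}}(p_* F),
\]
which is the equality we seek (noting that both sides are supported on Lagrangian subsets, so equality of classes lifts to equality as cycles under the identification in \eqref{eqn:lagrangian-cycle-cohomology-class}).

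The step I expect to be the main obstacle is the first one: setting up the Kashiwara-Schapira pushforward formula in exactly the correspondence-theoretic form used on Saito's side, so that the two $c_{\ast}$'s really are the pushforward and pullback along the same pair $(dp, p_\pi)$. Once this identification is made cleanly, the argument above is almost formal. A minor secondary point is the passage from the analytic cotangent correspondence for $p^{an}$ to the algebraic one; this is harmless because $p$ being projective means $T^*Y$, $Y \times_X T^*X$, and $T^*X$ all have algebraic models with analytifications matching the analytic picture used by Kashiwara-Schapira.
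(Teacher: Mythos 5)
Your proof is correct and follows essentially the same route as the paper: it combines the Kashiwara--Schapira proper pushforward formula (\cite[Prop.~9.4.2]{Kashiwara-Schapira}), Saito's proper pushforward formula for $\mathrm{CC^S}$ (extended $\ell$-adically via Umezaki--Yang--Zhao), and the functoriality of the refined cycle class map under correspondence pushforward (Lemma~\ref{lemma:cycle-class-functorial}), linked by the hypothesis on $F$. The only thing worth flagging is that the paper records one more technical waypoint in the Saito pushforward step: via \cite[Thm.~5.17(1)]{Umezaki-Yang-Zhao-20} one reduces to torsion coefficients, where the pushforward formula is \cite[Prop.~2.2.7(2)]{Saito-proper}, and Saito's result as stated requires a dimension hypothesis on the image of the singular support that must be verified to hold automatically over a characteristic zero base (\cite[Thm.~2.2.5]{Saito-proper}).
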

%proof
\begin{proof}
Let the pushforward in cycles (resp. cohomology) along the correspondence
	%commutative diagram
	\[
	\begin{tikzcd}
	& T^*X\times_XY \arrow[dl, "dp"'] \arrow[dr]&\\
	T^*Y & & T^*X.
	\end{tikzcd}
	\]
be denoted by $p_!$ (resp. $p_*$). We have the following equality of cycles
	\[
		\mathrm{CC^{SUYZ}}(p_*F) \overset{(1)}{=} p_!\mathrm{CC^{SUYZ}}(F) \overset{(2)}{=} p_*\mathrm{CC^{KS}}(F) \overset{(3)}{=} \mathrm{CC^{KS}}(p_*F).
	\]
Using \cite[Thm. 5.17(1)]{Umezaki-Yang-Zhao-20} we are reduced to proving (1) for a Zariski constructible \'{e}tale sheaf $F$ with coefficients in $\mathcal{O}_{\Lambda}/\lambda^n$.
The equality (1) for $F \in \mathrm{D}^b_c(Y_{\text{\'{e}t}}, \mathcal{O}_{\Lambda}/\lambda^n)$ is the content of \cite[Prop. 2.2.7(2)]{Saito-proper}.
Saito first proves that the equality holds under certain addtional assumption on the dimension of $f_0(\mathrm{SS}(F))$ (See \cite[Thm. 2.2.5]{Saito-proper}). He then proves that this additional assumption is automatically satisfied when $X$ is defined over a field of characteristic 0.
This finishes the proof of equality (1) above. The equality (3) is the content of \cite[Prop. 9.4.2]{Kashiwara-Schapira}. The equality (2) is explained in Lemma \ref{lemma:cycle-class-functorial} of \S\ref{subsec:cycle-class-functorial}.
\end{proof}

%remark
\begin{remark}
The assumption in the above lemma can be relaxed to - $f$ a quasi-projective map and proper on the support of $F$. This assumption is forced on us since we rely crucially on Saito's result \cite[Prop. 2.2.7(2)]{Saito-proper}.
\end{remark}

%lemma
\begin{lemma} \label{lemma:irreducible-jordan-quotient}
Let $F$ be a perverse sheaf on $X$ such that $F|_U$ is isomorphic to $L|_U$ where $L|_U$ is a simple local system. Then $\mathrm{IC}_X(L|_U)$ is the only simple subquotient of $F$ with support containing the open subset $U$.
\end{lemma}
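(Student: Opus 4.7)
The plan is to exploit Jordan--H\"older in the abelian category $\mathrm{Perv}(X)$ of perverse sheaves on $X$ together with the $t$-exactness of the restriction $j^{*}\colon \mathrm{Perv}(X) \to \mathrm{Perv}(U)$ for the open immersion $j\colon U \hookrightarrow X$. Set $d = \dim X$ and $L' \colonequals L|_{U}[d]$; by hypothesis $L'$ is a simple object of $\mathrm{Perv}(U)$ with $j^{*}F \cong L'$, so $L'$ has length one.

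First I will pick an arbitrary Jordan--H\"older filtration $0 = F_{0} \subset F_{1} \subset \cdots \subset F_{n} = F$ in $\mathrm{Perv}(X)$, with simple subquotients $G_{i} \cong \mathrm{IC}_{Z_{i}}(M_{i})$, each associated to an irreducible closed subvariety $Z_{i} \subset X$ and a simple local system $M_{i}$ on a smooth dense open subset of $Z_{i}$. Applying the exact functor $j^{*}$ yields a filtration of $L'$ in $\mathrm{Perv}(U)$ with subquotients $j^{*}G_{i}$. Since length is additive on short exact sequences,
\[
\sum_{i=1}^{n} \mathrm{length}(j^{*}G_{i}) \;=\; \mathrm{length}(L') \;=\; 1,
\]
so exactly one index $i_{0}$ satisfies $j^{*}G_{i_{0}} \neq 0$, and automatically $j^{*}G_{i_{0}} \cong L'$. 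For every $i \neq i_{0}$ the support of $G_{i}$ is disjoint from $U$, so in particular it does not contain $U$.

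To identify $G_{i_{0}}$: because $j^{*}G_{i_{0}} \cong L'$ is a (shifted) local system supported on all of $U$, the closed set $Z_{i_{0}} \cap U$ equals $U$, hence $Z_{i_{0}} \supset U$. Irreducibility of $Z_{i_{0}}$ then forces it to be the irreducible component of $X$ containing $U$, so we may write $G_{i_{0}} = \mathrm{IC}_{X}(M_{i_{0}})$ for a simple local system $M_{i_{0}}$ on some dense open $V_{i_{0}} \subset X$. Restricting $G_{i_{0}}$ to the nonempty open $U \cap V_{i_{0}}$ identifies $M_{i_{0}}|_{U \cap V_{i_{0}}}$ with $L|_{U \cap V_{i_{0}}}$, and since the intermediate extension $\mathrm{IC}_{X}(-)$ is unchanged when the open subset on which the local system is defined is shrunk, we conclude $G_{i_{0}} \cong \mathrm{IC}_{X}(L|_{U})$.

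The only mildly subtle point is that $j^{*}G_{i}$ need not itself remain simple in $\mathrm{Perv}(U)$---a simple perverse sheaf on $X$ may well decompose further after restriction to an open subset---but this is not an obstruction, since additivity of length in $\mathrm{Perv}(U)$ is all that the argument requires.
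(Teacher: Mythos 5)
Your proof is correct and follows essentially the same approach as the paper's (the paper reduces to the semisimplification of a composition series, i.e.\ Jordan--H\"older, and then observes that a simple perverse sheaf restricting to the simple local system $L|_U$ must be $\mathrm{IC}_X(L|_U)$). Your version is more detailed, making explicit the $t$-exactness of $j^*$, the length-additivity argument that isolates the unique subquotient not killed by restriction, and the identification of that subquotient via the shrinking-invariance of intermediate extension.
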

%proof
\begin{proof}
This statement is about composition series and hence we may assume that $F$ is semisimple. Moreover $F$ may be assumed to be simple since the local system of interest is simple. So it suffices observe that for a simple perverse sheaf $F$, $F|_U$ is isomorphic to a simple local system $L|_U$, then $F \cong \mathrm{IC}_X(L|_U)$.
\end{proof}

\subsection{Summary of properties of singular supports and characteristic cycles} \label{subsec:summary}
 The properties of the singular supports and the characteristic cycles are summarised below. In this subsection the notation $\mathrm{SS}(F)$ and $\mathrm{CC}(F)$ are used to signify that the properties enumerated below continues to hold true for both the notions of the singular support and of the characteristic cycle discussed in \S \ref{subsec:analytic} and \S \ref{subsec:algebraic}.
\begin{enumerate}
	\item $\mathrm{SS}(F) = \cup_i \mathrm{SS}({}^p\mathrm{H}^i(F))$, $\mathrm{CC}(F) = \underset{i}{\sum}\mathrm{CC}({}^p\mathrm{H}^i(F))$.  See \cite[Prop. 5.1.3(iii), Prop. 9.4.5]{Kashiwara-Schapira}, \cite[Lemma 5.13(1)]{Saito}, and Propisition \ref{prop:basic-properties-CC-SS-SUYZ}(1) above.
	\item For an exact sequence of perverse sheaves $0 \rightarrow F_1 \rightarrow F_2 \rightarrow F_3 \rightarrow 0$, the equalities $\mathrm{SS}(F_2) = \mathrm{SS}(F_1) \cup \mathrm{SS}(F_2)$ and $\mathrm{CC}(F_2) = \mathrm{CC}(F_1)+\mathrm{CC}(F_3)$ holds. See \cite[Prop. 9.4.5(ii)]{Kashiwara-Schapira}, \cite[Lemma 5.13(1)]{Saito}, and Proposition \ref{prop:basic-properties-CC-SS-SUYZ}(3).
	\item Suppose that $U \overset{j}{\hookrightarrow} X$ is an open subset of $X$ such that $Z = X \setminus U = \underset{i=1}{\overset{n}{\cup}} D_i$ is a normal crossing divisor. Let $F$ be a locally constant constructible sheaf on $U$, whose complement is $D$. Then $\mathrm{SS}(j_!F) = \underset{I \subset \lbrace 1, 2, \dots, n \rbrace}{\cup} T^*_{D_I}X$, and $\mathrm{CC}(j_!F) = (-1)^{\dim X} \rank(F)\underset{I \subset \lbrace 1, 2, \dots, n \rbrace}{\sum}T^*_{D_I}X$. See Proposition \ref{prop:basic-properties-CC-SS-SUYZ}(4) and Lemma \ref{lemma:char-cycle-shriek} above.
	\item Let $p : X \rightarrow Y$ be a projective morphism between smooth varieties. Then $\mathrm{CC^{KS}}(F) = \mathrm{CC^{SUYZ}}(F) \Rightarrow \mathrm{CC^{KS}}(p_*F) = \mathrm{CC^{SUYZ}}(p_*F)$. See Lemma \ref{lemma:char-cycle-equal-pushforward} above.
\end{enumerate}

\section{Main theorem and proof} \label{sec:main-theorem-proof}
%theorem
\begin{theorem} \label{thm:main}
Let $X$ be a smooth algebraic variety, and $F \in \mathrm{D}^b_c(X_{\textup{\'{e}t}}, \mathbb{Q}_{\ell})$. Then $\mathrm{CC^{SUYZ}}(F) = \mathrm{CC^{KS}}(F)$.
\end{theorem}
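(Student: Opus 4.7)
The plan is to induct on the dimension of the support of a perverse sheaf, reducing to the strict normal crossing case where property 3 of Section \ref{subsec:summary} applies directly. By property 1, both $\mathrm{CC^{KS}}$ and $\mathrm{CC^{SUYZ}}$ are additive under perverse truncation, so one can assume $F$ is perverse. Since $\mathrm{Perv}(X_{\textup{\'{e}t}},\mathbb{Q}_\ell)$ has finite length, applying property 2 to a Jordan--H\"older composition series reduces to the case when $F$ is a simple perverse sheaf, i.e.\ $F=\mathrm{IC}_Z(\mathcal L)$ for some irreducible closed subvariety $Z\subset X$ and a simple local system $\mathcal L$ on a smooth dense open $V\subset Z_{\mathrm{sm}}$.

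The induction is on $\dim Z$. The base case $\dim Z=0$ is a direct stalk-level computation since the sheaf is a shifted skyscraper. For the inductive step, Hironaka's resolution of singularities (applied to $(Z,Z\setminus V)$) yields a projective birational map $p:\widetilde Z\to Z$ with $\widetilde Z$ smooth, and an open $\widetilde V=p^{-1}(V)\subset\widetilde Z$ such that $p|_{\widetilde V}$ is an isomorphism onto $V$ and $\widetilde Z\setminus\widetilde V$ is a strict normal crossing divisor. Let $\iota:Z\hookrightarrow X$ be the inclusion and $\pi=\iota\circ p:\widetilde Z\to X$, a projective morphism of smooth varieties. Setting $\widetilde{\mathcal L}=(p|_{\widetilde V})^*\mathcal L$ and $H=\widetilde\jmath_{!}\widetilde{\mathcal L}[\dim\widetilde Z]$ with $\widetilde\jmath:\widetilde V\hookrightarrow\widetilde Z$, property 3 gives $\mathrm{CC^{KS}}(H)=\mathrm{CC^{SUYZ}}(H)$, and property 4 (Lemma \ref{lemma:char-cycle-equal-pushforward}) upgrades this to $\mathrm{CC^{KS}}(\pi_*H)=\mathrm{CC^{SUYZ}}(\pi_*H)$.

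It remains to isolate $\mathrm{IC}_Z(\mathcal L)$ inside $\pi_*H$. Since $\pi^{-1}(V)=\widetilde V$ and $\pi|_{\widetilde V}$ is an isomorphism, proper base change gives $(\pi_*H)|_V=\mathcal L[\dim Z]$, so ${}^p\mathrm{H}^0(\pi_*H)|_V$ is a simple local system placed in perverse degree zero and ${}^p\mathrm{H}^i(\pi_*H)|_V=0$ for $i\neq 0$. Lemma \ref{lemma:irreducible-jordan-quotient} then identifies $\mathrm{IC}_Z(\mathcal L)$ as the unique simple subquotient of ${}^p\mathrm{H}^0(\pi_*H)$ whose support meets $V$, and a stalk count at a generic point of $V$ shows it appears with multiplicity one; every other composition factor of every ${}^p\mathrm{H}^i(\pi_*H)$ has support strictly contained in $Z$, hence of dimension $<\dim Z$, and is covered by the inductive hypothesis. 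Combining properties 1 and 2 yields
\[
\mathrm{CC}(\pi_*H)=\mathrm{CC}(\mathrm{IC}_Z(\mathcal L))+\sum \bigl(\text{contributions from factors of support }\subsetneq Z\bigr)
\]
uniformly for $\mathrm{CC}\in\{\mathrm{CC^{KS}},\mathrm{CC^{SUYZ}}\}$, and subtracting the (inductively known) lower-dimensional terms from the (already known) equality for $\pi_*H$ closes the induction.

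The main obstacle is the bookkeeping in the third step: one must verify that Lemma \ref{lemma:irreducible-jordan-quotient} applies with the locally closed $V\subset X$ rather than an open subset (its proof reduces matters to a single simple perverse sheaf $F$, which is insensitive to whether $V$ is open or merely locally closed), and that Hironaka can be arranged to produce both smoothness of $\widetilde Z$ and an SNC structure on $\widetilde Z\setminus\widetilde V$ simultaneously, which is standard. A subsidiary technicality is that the decomposition $\mathrm{CC}(\pi_*H)=\sum_i\mathrm{CC}({}^p\mathrm{H}^i(\pi_*H))$ is needed even though $\pi_*H$ is not itself perverse; this is precisely the content of property 1.
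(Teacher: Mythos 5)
Your argument is correct and follows essentially the same strategy as the paper's: induct on the dimension of the support, reduce via properties~1 and~2 of \S\ref{subsec:summary} to simple perverse sheaves, resolve by Hironaka, push forward along a projective map using property~4 (Lemma~\ref{lemma:char-cycle-equal-pushforward}), handle the strict normal crossing case via property~3, and absorb the lower-dimensional composition factors using the inductive hypothesis together with Lemma~\ref{lemma:irreducible-jordan-quotient}.

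There are two small points where your write-up is actually a bit cleaner than the paper's. First, you push forward $H=\widetilde\jmath_!\widetilde{\mathcal L}[\dim\widetilde Z]$ itself, for which property~3 applies immediately; the paper instead pushes forward the intermediate extension $\widetilde F=\widetilde\jmath_{!*}L[\dim\widetilde Z]$, invokes the decomposition theorem, and then needs a separate Step~4 (the four-term recollement sequence $0\to i_*{}^p\mathrm H^{-1}i^*\widetilde F\to j_!j^*\widetilde F\to\widetilde F\to i_*{}^p\mathrm H^0i^*\widetilde F\to0$) to reduce $\widetilde F$ to $j_!L[\dim\widetilde Z]$. Your version removes that extra reduction at the cost of handling a non-semisimple $\pi_*H$, but since only additivity of $\mathrm{CC}$ (properties~1,~2) is used, nothing is lost. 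Second, you resolve the pair $(Z, Z\setminus V)$ rather than $(X, X\setminus U)$; the paper's phrasing ``a projective birational map $r:\widetilde X\to X$'' is only literally correct when the simple perverse sheaf has full support, whereas your $\pi=\iota\circ p:\widetilde Z\to X$ is the right map in general, with properness (needed for Lemma~\ref{lemma:char-cycle-equal-pushforward}) coming from $p$ projective and $\iota$ closed. Both points are correctly flagged in your last paragraph, and the stalk-count-at-a-generic-point argument for multiplicity one is exactly what makes Lemma~\ref{lemma:irreducible-jordan-quotient} usable with $V$ merely locally closed in $X$.
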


\begin{proof}
	We may assume $F \neq 0$ since the theorem is clear when $F=0$. The proof of the theorem proceeds via an induction argument on the dimension of the support. The case of objects $F \in \mathrm{D}^b_c(X_{\text{\'{e}t}}, \mathbb{Q}_{\ell})$ supported of dimension 0 is obvious. We may thus begin the induction process.
\begin{enumerate}
	\item[Step 1.] \textit{Reduce to $F$ a simple perverse sheaf}. An application of \S\ref{subsec:summary}(1) implies that it is enough to prove the theorem for a perverse sheaf $F$. Using \S \ref{subsec:summary}(2) we are reduced to considering only simple perverse sheaves.\\

	\item[Step 2.] \textit{Behaviour under taking subobjects and quotients}. Let $0 \rightarrow F_1 \rightarrow F_2 \rightarrow F_3 \rightarrow 0$ be an exact sequence of perverse sheaves. If the theorem holds for any two out of the three perverse sheaves $F_1,F_2$, and $F_3$, then it holds for the remaining one as well.
This is again clear by using \S \ref{subsec:summary}(2).\\

	\item[Step 3.] \textit{Reduction to $F=j_{!*}L_U$ where $j : U \hookrightarrow X$ is an open embedding whose complement is strict normal crossing divisor}. Let $F$ be a simple perverse sheaf. We may assume that $F = j_{!*}L_U$, where $L_U$ is a simple local system on $U$.
After possibly choosing a smaller open subset $U$ and using Hironaka's theorem on resolution of singularities, we can ensure that there exists a smooth variety $\widetilde{X}$ and a projective birational map $r : \widetilde{X} \rightarrow X$,
such that $r|_{r^{-1}(U)} : r^{-1}(U) \rightarrow U$ is an isomorphism and $\begin{tikzcd} r^{-1}(U) \arrow[r, hook, "\tilde{j}"] & \widetilde{X} & \arrow[l, hook', "\tilde{i}"'] Z \end{tikzcd}$, where $Z \subset \widetilde{X}$ is a strict normal crossing divisor.
Let $\widetilde{F} \colonequals j_{!*}L_{r^{-1}(U)}$. Then using the decomposition theorem with respect to supports for perverse sheaf $\widetilde{F}$, we get
\[
	r_*\widetilde{F} = {}^{p}\mathrm{H}^0(r_*F) \oplus \lbrace \text{shifted direct sum of perverse sheaves supported on smaller dimensional subvarieties} \rbrace.
\]
Assuming that the theorem holds for $\widetilde{F}$, we use \S \ref{subsec:summary}(4) to conclude that the theorem holds for $r_*\widetilde{F}$. An application of Step 2 above, and the hypothesis that the theorem holds for all perverse sheaves supported on closed subsets of $X$ of dimension $<\dim X$ implies that the theorem holds for ${}^{p}\mathrm{H}^0(r_*F)$.
Using Lemma 2.5 we get that $F \hookrightarrow {}^p\mathrm{H}^0(r_*\widetilde{F})$ with the quotient being supported on smaller dimensional smooth varieties. An application of step 2 and the hypothesis that the theorem holds for all perverse sheaves supported on closed subsets of $X$ of dimension $<\dim X$ we get that the theorem is true for $F$. Thus completing the proof of this step.\\

	\item[Step 4.] \textit{Proof of the theorem for $F$ as in Step 3}. Let $F$ be a perverse sheaf on a smooth variety such that $F = j_{!*}(L_U)$ and $X \setminus U$ is a strict normal crossing divisor. The triangle $\cdots \rightarrow j_!j^*F \rightarrow F \rightarrow i_*i^*F \rightarrow \cdots$ gives the following exact sequence of perverse sheaves (See \cite[Cor. 4.1.10(ii)]{BBD})
	\[
		0 \rightarrow i_*{}^p\mathrm{H}^{-1}i^*F \rightarrow j_!j^*F \rightarrow F \rightarrow i_*{}^p\mathrm{H}^0i^*F \rightarrow 0.
	\]
To prove the theorem for $F$, it is enough to prove the theorem for all objects in the the above exact sequence other than $F$. By inductive hypothesis we may asusme that the theorem holds for the extreme terms.
Thus it is enough to prove the theorem for $j_!j^*F$ which follows form \S \ref{subsec:summary}(3).
\end{enumerate}
\end{proof}

%corollary
\begin{corollary}
With notation as in the previous theorem, $\mathrm{SS^{BUYZ}}(F) = \mathrm{SS^{KS}}(F)$
\end{corollary}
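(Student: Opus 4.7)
The plan is to deduce the corollary directly from Theorem \ref{thm:main} by passing to supports, after reducing to the case of a perverse sheaf.

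First I would invoke property (1) of \S\ref{subsec:summary} together with the definition of $\mathrm{SS^{BUYZ}}$ recalled in \S\ref{subsec:algebraic}: both $\mathrm{SS^{KS}}(F) = \bigcup_i \mathrm{SS^{KS}}({}^p\mathrm{H}^i(F))$ and $\mathrm{SS^{BUYZ}}(F) = \bigcup_i \mathrm{SS^{BUYZ}}({}^p\mathrm{H}^i(F))$. This reduces the corollary to the case where $F$ is perverse.

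For a perverse $F$, $\mathrm{SS^{BUYZ}}(F) = \mathrm{Supp}(\mathrm{CC^{SUYZ}}(F))$ by the very definition. On the analytic side, Lemma \ref{lemma:support-analytic-cycle} gives $\mathrm{Supp}(\mathrm{CC^{KS}}(F)) \subseteq \mathrm{SS^{KS}}(F)$; the reverse inclusion amounts to strict positivity of the multiplicity on each irreducible component of $\mathrm{SS^{KS}}(F)$. I would extract this from the Riemann--Hilbert correspondence: for the regular holonomic $D$-module $M = \mathrm{DR}_{X^{an}}^{-1}(F)$ each irreducible component of the characteristic variety appears with positive multiplicity in the characteristic cycle, and both $\mathrm{SS}$ and $\mathrm{CC}$ are compatible with $\mathrm{DR}_{X^{an}}$ (as already exploited in the proof of Lemma \ref{lemma:support-analytic-cycle}). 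This yields $\mathrm{SS^{KS}}(F) = \mathrm{Supp}(\mathrm{CC^{KS}}(F))$.

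Combining these observations, Theorem \ref{thm:main} supplies $\mathrm{CC^{KS}}(F) = \mathrm{CC^{SUYZ}}(F)$ as cycles and hence the two supports coincide, proving the corollary. The only nontrivial point to spell out carefully is the strict positivity of the KS-multiplicities for a perverse sheaf; everything else is formal. As a byproduct, the argument also shows that $\mathrm{SS^{KS}}(F)$ is automatically Zariski closed whenever $F$ lies in the essential image of $\varepsilon^*$, which is why no Zariski closure appears in the statement, in contrast with Theorem \ref{thm:intro-SS}.
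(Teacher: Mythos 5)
Your proof follows essentially the same route as the paper: reduce to the perverse case via \S\ref{subsec:summary}(1), identify $\mathrm{SS^{BUYZ}}(F)=\mathrm{Supp}\,\mathrm{CC^{SUYZ}}(F)$ from the definition, use Lemma \ref{lemma:support-analytic-cycle} (together with the Riemann--Hilbert argument in its proof) to get $\mathrm{SS^{KS}}(F)=\mathrm{Supp}\,\mathrm{CC^{KS}}(F)$, and conclude by taking supports in Theorem \ref{thm:main}. You are more explicit than the paper about why the reverse inclusion $\mathrm{SS^{KS}}(F)\subseteq\mathrm{Supp}\,\mathrm{CC^{KS}}(F)$ requires strict positivity of the multiplicities, and correctly locate this in the holonomic $D$-module side of the Riemann--Hilbert correspondence, which is exactly what the proof of Lemma \ref{lemma:support-analytic-cycle} relies on; so this is a helpful expansion rather than a different method.
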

%proof
\begin{proof}
Let $F$ be a perverse sheaf. We know from \cite[Prop. 5.14(2)]{Saito-proper} that support of the cycle $\mathrm{CC^S}(F)$ is $\mathrm{SS^B}(F)$. It is clear from the definitions of the extended notions of characteristic cycles to $\mathbb{Q}_{\ell}$-sheaves that the support of the cycle $\mathrm{CC^{SUYZ}}(F)$ is $\mathrm{SS^{BUYZ}}(F)$.
On the other hand, the arguments of Kashiwara-Schapira and Schmid-Vilonen as summarized in Lemma \ref{lemma:support-analytic-cycle} implies that support of the Lagrangian cycle $\mathrm{CC^{KS}}(F)$ is $\mathrm{SS^{KS}}(F)$.
Now taking supports of both sides of the equality established in Theorem \ref{thm:main} we get that for a perverse sheaf $F$, the equality $\mathrm{SS^{BUYZ}}(F) = \mathrm{SS^{KS}}(F)$ holds.
Now using \S \ref{subsec:summary}(1) the equality holds for all $F \in \mathrm{D}^b_c(X_{\text{\'{e}t}}, \mathbb{Q}_{\ell})$.
\end{proof}

\appendix
\section{Coefficients in $\mathrm{CC^{KS}}$ and $\mathrm{CC^S}$} \label{appendix:sec-coefficient-system}
The characteristic cycle $\mathrm{CC^S}(F)$ of a constructible sheaf $F \in \mathrm{D}^b(X, \Lambda)$ defined by Saito have coefficients in the ring of integers irrespective of the ring of coefficients $\Lambda$. On the other hand characteristic cycle as constructed\footnote{Note that in \cite[\S 9.4]{Kashiwara-Schapira}, it is assumed that $\Lambda = k$ is a field of characteristic 0, but clearly the maps make sense for any ring $\Lambda$ of finite global dimension.} by Kashwara-Schapira have coefficients in the ring $\Lambda$.
Under the assumption that $\Lambda$ is a field of characteristic 0, it is proved (\cite[Prop. 9.4.5]{Kashiwara-Schapira}) that characteristic cycle $\mathrm{CC^{KS}}(-)$ has coefficients in the ring $\mathbb{Z}$.
In this section we wish to understand the dependence of the characteristic cycle on the coefficient system.\\

\noindent
Let $\Lambda \in \lbrace \mathbb{Z}/\ell^n\mathbb{Z}, \mathbb{Z}_{\ell} \rbrace$ and $F \in \mathrm{D}^b_c(X_{\text{\'{e}t}}, \Lambda)$. For convenience we denote the sheaf $\varepsilon^*F$ again by $F$ in this section. Recall that $\mathrm{CC^S}(F)$ is an element of $\mathrm{CH}^{2\dim X}(\mathrm{SS^B}(F))$ and $\mathrm{CC^{KS}}(F)$ is an element of $\mathrm{H}^{0}_{\mathrm{SS}(F)}(T^*X^{an}, \omega_{X^{an}})$.
Since $\Lambda$ is a noetherian ring we may associate a well-defined integer $\dim \mathrm{tot}(K)$ to a perfect complex $K \in \mathrm{D}^b_{perf}(\Lambda\text{-mod})$.
We proceed as in \cite[pp. 382]{Kashiwara-Schapira};  for $F \in \mathrm{D}^b_c(X^{an}, \Lambda)$ and $p$ in an irreducible component $V$ of $\mathrm{SS}(F)$ there exists $K \in \mathrm{D}^b(\Lambda\text{-mod})$ such that $F \xrightarrow{\sim} A$ (\cite[Prop. 6.6.2]{Kashiwara-Schapira}) in $\mathrm{D}^b(X^{an};p)$ (\cite[\S 6.1]{Kashiwara-Schapira}) and we define $m_V \colonequals \dim \mathrm{tot}(A)$.
Then the image of $id \in \mathrm{R}\underline{\hom}(F, F)_p = \mathrm{H}^0(\mu\hom(F, F))_p$ (\cite[Thm. 6.1.2]{Kashiwara-Schapira}) in $\Lambda$ is given by the image of $\dim\mathrm{tot}(A)$ in the ring $\Lambda$ under the canonical map $\mathbb{Z} \rightarrow \Lambda$. Now using Theorem \ref{thm:main} we get

%lemma
\begin{lemma}
The image of $\mathrm{CC^{S}}(F)$ under the canonical map $\mathbb{Z} \rightarrow \Lambda$ is the cycle $\mathrm{CC^{KS}}(F)$.
\end{lemma}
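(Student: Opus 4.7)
The plan is to verify the asserted equality coefficient-by-coefficient at each irreducible component $V$ of $\mathrm{SS}(F)$. The paragraph immediately preceding the lemma shows that the $\Lambda$-coefficient of $\mathrm{CC^{KS}}(F)$ at $V$ equals the image in $\Lambda$ of the integer $\dim\mathrm{tot}(A)$, where $A \in \mathrm{D}^b(\Lambda\text{-mod})$ is the microlocal model of $F$ at a smooth point of $V$. Thus the lemma reduces to the assertion that the integer coefficient $m^S_V$ of $\mathrm{CC^S}(F)$ at $V$ is precisely $\dim\mathrm{tot}(A)$.

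For $\Lambda = \mathbb{Z}_\ell$ with $F$ torsion-free, the approach is to base change along $\mathbb{Z}_\ell \hookrightarrow \mathbb{Q}_\ell$ and invoke Theorem \ref{thm:main}. Set $F_{\mathbb{Q}_\ell} \colonequals F \otimes_{\mathbb{Z}_\ell} \mathbb{Q}_\ell$; its local microlocal model at $V$ is $A \otimes_{\mathbb{Z}_\ell} \mathbb{Q}_\ell$, and since $A$ is a perfect complex over $\mathbb{Z}_\ell$ one has $\dim\mathrm{tot}(A \otimes \mathbb{Q}_\ell) = \dim\mathrm{tot}(A) \in \mathbb{Z}$. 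By Theorem \ref{thm:main}, $\mathrm{CC^{KS}}(F_{\mathbb{Q}_\ell}) = \mathrm{CC^{SUYZ}}(F_{\mathbb{Q}_\ell})$. The $V$-coefficient of the left side is the image of $\dim\mathrm{tot}(A)$ in $\mathbb{Q}_\ell$; the $V$-coefficient of the right side is, by the very definition of $\mathrm{CC^{SUYZ}}$ applied to a torsion-free sheaf, the image of $m^S_V$ in $\mathbb{Q}_\ell$. Injectivity of $\mathbb{Z} \hookrightarrow \mathbb{Q}_\ell$ forces $m^S_V = \dim\mathrm{tot}(A)$. Extension to a general $F \in \mathrm{D}^b_c(X_{\text{\'{e}t}}, \mathbb{Z}_\ell)$ (where $\mathrm{CC^S}$ is defined) proceeds by decomposing into perverse cohomology sheaves and using the additivity stated in \S\ref{subsec:summary}(1)--(2).

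For $\Lambda = \mathbb{Z}/\ell^n\mathbb{Z}$ one would like to reduce to the $\mathbb{Z}_\ell$-case by lifting: the local model $A$ is a perfect complex of $\mathbb{Z}/\ell^n\mathbb{Z}$-modules, hence a bounded complex of finitely generated free modules, which deforms to a perfect complex $\widetilde{A}$ over $\mathbb{Z}_\ell$ with $\dim\mathrm{tot}(\widetilde{A}) = \dim\mathrm{tot}(A)$. Saito's Milnor formula \eqref{eqn:Milnor-formula} characterises $m^S_V$ locally in terms of $\mathrm{tot}\dim$ of an algebraic vanishing cycle at a generic transversal point of $V$; by Deligne's comparison (Theorem \ref{thm:comp-vanishing-cycle}) this vanishing cycle agrees with its analytic counterpart, which depends only on the analytic local model, i.e.\ only on $A$. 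Hence $m^S_V$ for the $\mathbb{Z}/\ell^n$-sheaf coincides with the corresponding integer for any $\mathbb{Z}_\ell$-lift of its local model, reducing the claim to the previous step. The hard part will be this final reduction: namely, rigorously establishing that Saito's Milnor-formula characterisation of $\mathrm{CC^S}$ is stable under the surjection $\mathbb{Z}_\ell \twoheadrightarrow \mathbb{Z}/\ell^n\mathbb{Z}$ of coefficient rings. This should follow from the compatibility of the vanishing cycles functor with coefficient base change (an extension, in our restricted setting, of the arguments underlying \cite[Prop. 5.9]{Umezaki-Yang-Zhao-20}).
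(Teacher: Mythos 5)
Your proposal correctly identifies the two key ingredients the paper uses: the observation that the $\Lambda$-valued coefficient of $\mathrm{CC^{KS}}(F)$ at a component $V$ is the image in $\Lambda$ of the integer $m_V=\dim\mathrm{tot}(A)$ coming from the Kashiwara--Schapira microlocal model, and the invocation of Theorem \ref{thm:main} to compare this integer with the Saito multiplicity. The paper's own proof is extremely terse (it is compressed into the single phrase ``Now using Theorem \ref{thm:main} we get''), so your write-up is at least as detailed as the source. Your treatment of the $\Lambda=\mathbb{Z}_\ell$ torsion-free case is essentially the intended argument: $\dim\mathrm{tot}$ is the Euler characteristic of a perfect complex, hence stable under $\otimes_{\mathbb{Z}_\ell}\mathbb{Q}_\ell$, and by construction $\mathrm{CC^{SUYZ}}(F\otimes\mathbb{Q}_\ell)=\mathrm{CC^S}(F)$ for such $F$; injectivity of $\mathbb{Z}\hookrightarrow\mathbb{Q}_\ell$ then forces $m_V=m^S_V$.

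However, your handling of $\Lambda=\mathbb{Z}/\ell^n\mathbb{Z}$ has a genuine gap, and you correctly flag it as ``the hard part.'' Lifting the \emph{local} microlocal model $A$ to a perfect complex $\widetilde{A}$ over $\mathbb{Z}_\ell$ does not lift the global sheaf $F$; a general object of $\mathrm{D}^b_{ctf}(X_{\text{\'et}},\mathbb{Z}/\ell^n\mathbb{Z})$ admits no $\mathbb{Z}_\ell$-lift, so the phrase ``reducing the claim to the previous step'' is not justified --- there is no $\mathbb{Z}_\ell$-sheaf whose $\mathrm{CC^S}$ one can point to. Note also that the direction of your reduction is backwards relative to the dependency of the definitions: $\mathrm{CC^S}$ of a torsion-free $\mathbb{Z}_\ell$-sheaf is \emph{defined} as $\mathrm{CC^S}$ of its mod-$\ell$ reduction, so the $\mathbb{Z}/\ell$-case is the primitive one and cannot be reduced to the $\mathbb{Z}_\ell$-case. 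What your alternative sketch (via the Milnor formula, Deligne's Theorem \ref{thm:comp-vanishing-cycle}, and the fact that $\mathrm{tot}\dim$ is an integer-valued Euler characteristic insensitive to coefficient extension) actually amounts to is a \emph{direct} local comparison of $m_V$ with $m^S_V$, not a reduction to $\mathbb{Z}_\ell$. That route is the right one, but to make it rigorous you would need a statement over general noetherian coefficients $\Lambda$ that the Kashiwara--Schapira multiplicity $m_V=\dim\mathrm{tot}(A)$ is computed by $-\chi(\phi^{an}_f(F)_{\pi(p)})$ at a generic $V$-transversal covector $p$ --- i.e.\ a $\Lambda$-coefficient version of the microlocal index formula --- and this is exactly the ingredient the paper sidesteps by routing through $\mathbb{Q}_\ell$. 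As written, your argument (like the paper's) silently assumes this in the torsion-coefficient case.
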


\section{Characteristic cycles for $\mathbb{Q}_{\ell}$-sheaves} \label{appendix-B}
We have already seen that the notion of $\mathrm{CC^S}$ has been extended for $F \in \mathrm{D}^b_c(X_{\text{\'{e}t}}, \mathbb{Q}_{\ell})$ by Umezaki-Yang-Zhao.
In a recent article \cite{Barrett-constructible}, Barrett has extended the definition of singular support $\mathrm{SS^B}$ by utilizing the interpretation of the condition of local acylicity ($\equiv$ universal local acyclicity) in terms of dualizable objects in a certain 2-monoidal category developed by Lu-Zheng and Hansen-Scholze.
This allows him to bypass nonfinite type schemes such as Milnor fibers (denoted $M_{\bar{x}, \bar{s}}$ in \S \ref{subsec:vanishing-cycles}) over which six functors maynot preserve the derived category of Zariski constructible \'{e}tale $\mathbb{Q}_{\ell}$-sheaves.
He also uses the pro\'{e}tale topology to bypass the 2-limit construction of derived category of $\mathbb{Q}_{\ell}$-sheaves which allows for cleaner arguments once certain technical results are proved (See \cite[\S 2, \S 3]{Barrett-constructible}).\\

\noindent
In this article Barrett constructs a torsion free $\mathbb{Z}_{\ell}$-model of a $\mathbb{Q}_{\ell}$-perverse sheaves. More precisely it is shown that given a perverse sheaf $F \in \mathrm{Perv}(X, \mathbb{Q}_{\ell})$, there exists $\mathcal{F} \in \mathrm{Perv}(X, \mathbb{Z}_{\ell})$ which is torsion free and $\mathcal{F}\otimes \mathbb{Q}_{\ell} \cong F$.
The singular support for $F$ can now be defined as
\[
	\mathrm{SS^{BB}}(F) \colonequals \mathrm{SS^B}(\mathcal{F}\otimes \mathbb{Z}_{\ell} /\ell), \text{ where $\mathcal{F}$ is torsion free and } \mathcal{F}\otimes \mathbb{Q}_{\ell} \cong F.
\]
It is natural to extend the definition of characteristic cycles along the same lines and define
\[
	\mathrm{CC^{SB}}(F) \colonequals \mathrm{CC^S}(\mathcal{F} \otimes \mathbb{Z}_{\ell}/\ell).
\]
It can be proved as in \cite[Prop. 5.9]{Umezaki-Yang-Zhao-20}, that $\mathrm{CC^{SB}}$ satisfies the Milnor number formula. Thus we may prove the following
%lemma
\begin{lemma}
For a perverse sheaf $F \in \mathrm{Perv}(X_{\textup{\'{e}t}}, \mathbb{Q}_{\ell})$, we have $\mathrm{CC^{SUYZ}}(F) = \mathrm{CC^{SB}}(F)$.
\end{lemma}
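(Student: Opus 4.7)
The plan is to show that both $\mathrm{CC^{SUYZ}}(F)$ and $\mathrm{CC^{SB}}(F)$ are Lagrangian cycles on $T^*X$ satisfying the \eqref{eqn:Milnor-formula} for every appropriate test pair, and then to invoke uniqueness: a Lagrangian cycle of pure dimension $\dim X$ supported on a prescribed closed conical subset of $T^*X$ is determined by its Milnor intersection numbers against the class of $C$-transversal test functions. By construction both cycles are integer linear combinations of irreducible closed conical Lagrangian subvarieties of $T^*X$ of dimension $\dim X$, so I take $C$ to be the union of their supports.

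Next I would verify the Milnor formula for both sides. For $\mathrm{CC^{SUYZ}}(F)$ this is \cite[Thm. 5.17(3)]{Umezaki-Yang-Zhao-20}. For $\mathrm{CC^{SB}}(F)$ one adapts \cite[Prop. 5.9]{Umezaki-Yang-Zhao-20}, as the article already indicates just before the lemma: starting from a torsion-free perverse integral model $\mathcal{F}$ with $\mathcal{F}\otimes \mathbb{Q}_\ell \cong F$, the short exact sequence $0 \to \mathcal{F} \xrightarrow{\ell} \mathcal{F} \to \mathcal{F}/\ell \to 0$ together with the compatibility of the vanishing cycle functor with this sequence identifies $\mathrm{tot}\dim\,\phi^{alg}_u(j^*(\mathcal{F}/\ell),f)$ with $\mathrm{tot}\dim\,\phi^{alg}_u(j^*F,f)$, which is precisely the right-hand side of the Milnor formula for $F$. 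This step is the main technical obstacle, since here one is forced to work at the level of a single perverse integral model rather than on each ordinary cohomology sheaf as in the cited proposition.

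To deduce the equality, set $D = \mathrm{CC^{SUYZ}}(F) - \mathrm{CC^{SB}}(F)$, a cycle supported on $C$. For each irreducible component $C_\alpha$ of $C$, pick a smooth point $(x_\alpha, \xi_\alpha) \in C_\alpha$ which avoids the other components, and (using that $C$ is conical and pure of dimension $\dim X$) a local function $f$ near $x_\alpha$ with $df(x_\alpha) = \xi_\alpha$ whose graph is transversal to $C \setminus \{(x_\alpha, \xi_\alpha)\}$ in a neighbourhood. The two Milnor formulae have the same right-hand side, so $(D, df)_{T^*U, x_\alpha} = 0$; transversality at $(x_\alpha, \xi_\alpha)$ then forces the coefficient of $C_\alpha$ in $D$ to vanish, and hence $D = 0$.
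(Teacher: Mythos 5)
Your argument is correct and is essentially the paper's proof: both approaches observe that $\mathrm{CC^{SUYZ}}(F)$ and $\mathrm{CC^{SB}}(F)$ satisfy the Milnor formula and then conclude by uniqueness, which you re-derive by hand via the transversality/intersection-number argument whereas the paper simply cites \cite[Lemma 4.10]{Saito} for the same fact. The step you flag as the main obstacle — passing from the cohomology-sheaf integral models to a single perverse integral model when verifying the Milnor formula for $\mathrm{CC^{SB}}$ — is precisely what the paper disposes of with the sentence ``It can be proved as in \cite[Prop. 5.9]{Umezaki-Yang-Zhao-20}'' immediately before the lemma, so your proposal is, if anything, more explicit than the paper's.
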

%proof
\begin{proof}
This follows immediately from \cite[Lemma 4.10]{Saito} and the fact that both the notions satisfy Milnor number formula.
\end{proof}
\noindent
We know from \cite[Prop. 5.14(2)]{Saito} that $\mathrm{CC^{SB}}(F) = \mathrm{SS^{BB}}(F)$ for any perverse sheaf $F \in \mathrm{Perv}(X_{\text{\'{e}t}}, \mathbb{Q}_{\ell})$. In light of the above lemma, we get $\mathrm{SS^{BB}} = \mathrm{SS^{BUYZ}}$.
Thus the definition of singular support $\mathrm{SS^{BUYZ}}$ coincides with the definition of singular supports developed by Barrett.

\AtEndDocument

\end{document}